\def\Cmath{\mathbb{C}}
\def\Kmath{\mathbb{K}}
\newtheorem{proposition}{Proposition}
\newtheorem{theorem}{Theorem}
\newtheorem{corollary}{Corollary}
\newtheorem{definition}{Definition}
\newtheorem{example}{Example}
\newtheorem{remark}{Remark}
\newcommand{\ilim}{\mathop{\varprojlim}\limits}
\title{About Leibniz cohomology and deformations of Lie algebras}
\author{A. Fialowski}
\address{Institute of Mathematics\\
E\"otv\"os Lor\'and University\\
 P\'azm\'any P\'eter s\'et\'any 1/C\\
 H-1117 Budapest, Hungary}
\email{fialowsk@cs.elte.hu}
\thanks{The research of the first author was partially supported by
OTKA grants K77757 and NK72523. The third author thanks the
Luxembourgian NRF for support via AFR grant PDR-09-062.}
\author{L. Magnin}
\address{Institut de Math\'{e}matiques de Bourgogne\\
UMR CNRS 5584, Universit\'{e} de Bourgogne, BP 47870,\\
21078 Dijon Cedex, France}
\email{magnin@u-bourgogne.fr}
\author{A. Mandal}
\address{University of Luxembourg, Campus Kirchberg\\
Math. Research Unit\\
6, rue Richard Coudenhove-Kelergi\\
L-1359 Luxembourg City}
\email{ashis.mandal@uni.lu}
\keywords{Leibniz algebra, Lie algebra, cohomology, versal deformation}
\subjclass[2000]{Primary: 17A32, Secondary: 17B56, 14D15}
\begin{document}
\maketitle
\begin{abstract}
We compare the second adjoint and trivial Leibniz
cohomology spaces  of a Lie algebra to the usual ones by a very elementary
 approach. The comparison
gives some conditions, which are easy to verify for a given Lie
algebra, for deciding whether it has more Leibniz deformations than just
the Lie ones. We also give the complete description of a Leibniz (and
Lie)
versal deformation of the 4-dimensional diamond Lie algebra, and study the
 case of its 5-dimensional analogue.
\end{abstract}
\section{Introduction}

Leibniz algebras, along with their Leibniz cohomologies,
were introduced in \cite{loday} as a non antisymmetric version of Lie algebras.
Lie algebras are special Leibniz algebras, and
Pirashvili  introduced \cite{pirashvili} a spectral sequence, that, when applied
to Lie algebras,
measures the  difference between the Lie  algebra cohomology
and the Leibniz cohomology.
Now, Lie algebras have deformations as Leibniz algebras
and those are piloted by the adjoint Leibniz 2-cocycles. In the present paper, we focus
on the second Leibniz cohomology groups
$HL^2(\mathfrak{g},\mathfrak{g}),$
$HL^2(\mathfrak{g},\Cmath)$
for adjoint and trivial
representations of a complex Lie algebra
$\mathfrak{g}$.
We  adopt a very elementary approach,
not resorting to  the Pirashvili sequence, to compare
$HL^2(\mathfrak{g},\mathfrak{g})$ and $HL^2(\mathfrak{g},\Cmath)$
to
$H^2(\mathfrak{g},\mathfrak{g})$ and $H^2(\mathfrak{g},\Cmath)$ respectively.
In both cases,
$HL^2$ appears to be the direct sum of 3 spaces:
$H^2 \oplus ZL^2_0 \oplus \mathcal{C}$ where
$H^2$ is the Lie algebra cohomology group,
$ZL^2_0$ is the space of symmetric
Leibniz-2-cocycles
and $\mathcal{C}$ is a space of \textit{coupled}
Leibniz-2-cocycles the nonzero elements of which have the property that
their symmetric and antisymmetric
parts are not Leibniz cocycles.
Our comparison gives some useful practical information about the
structure of Lie and Leibniz cocycles.
We analyse the case of Heisenberg algebras, the 4-dimensional
diamond algebra and its 5-dimensional analogue. We completely describe
a versal Leibniz and Lie deformation of the diamond algebra.

\section{Leibniz cohomology and deformations}
Recall that a (right) Leibniz algebra is an algebra
$\mathfrak{g}$
with a (non necessarily antisymmetric) bracket,
such that the right adjoint operations $[\cdot,Z]$ are required to be derivations
for any $Z \in \mathfrak{g}.$
In the presence of antisymmetry, that is equivalent to the Jacobi identity, hence any Lie
algebra is a Leibniz algebra.

The Leibniz cohomology
$HL^\bullet(\mathfrak{g},\mathfrak{g})$
of a Leibniz algebra is defined from the complex
$CL^\bullet(\mathfrak{g},\mathfrak{g}) =  \text{Hom }
\left(\mathfrak{g}^{\otimes \bullet}, \mathfrak{g} \right) =
\mathfrak{g} \otimes \left(\mathfrak{g}^*\right)^{\otimes \bullet}$
with the Leibniz-coboundary $\delta$ defined for $\psi \in
CL^n(\mathfrak{g},\mathfrak{g})$ by
\begin{multline*}
(\delta \psi)(X_1,X_2, \cdots , X_{n+1}) =
\\
 [X_1,\psi(X_2,\cdots, X_{n+1})]
 +
 \sum_{i=2}^{n+1} \,(-1)^i [\psi(X_1, \cdots, \hat{X_i}, \cdots, X_{n+1}), X_i]
 \\
 +
 \sum_{1\leqslant i < j \leqslant n+1} (-1)^{j+1}  \,
 \psi(X_1, \cdots, X_{i-1},[X_i,X_j],X_{i+1},\cdots, \hat{X_j}, \cdots, X_{n+1}).
\end{multline*}
(If $\mathfrak{g}$ is a Lie algebra, $\delta$ coincides with the usual coboundary $d$
on $C^\bullet(\mathfrak{g},\mathfrak{g}) =
\mathfrak{g} \otimes \bigwedge^{\bullet} \, \mathfrak{g}^*.$
)

For $\psi \in
CL^1(\mathfrak{g},\mathfrak{g}) =
C^1(\mathfrak{g},\mathfrak{g}) =
\mathfrak{g} \otimes {\mathfrak{g}}^*$
$$(\delta \psi)(X,Y) = [X,\psi(Y)] +[\psi(X),Y] -\psi([X,Y]).$$

For $\psi \in
CL^2(\mathfrak{g},\mathfrak{g}) =
\mathfrak{g} \otimes \left(\mathfrak{g}^*\right)^{\otimes 2},$
\begin{multline*}
(\delta \psi)(X,Y,Z) = [X,\psi(Y,Z)] +[\psi(X,Z),Y] - [\psi(X,Y),Z]
\\
-\psi([X,Y],Z)
+\psi(X,[Y,Z])
+\psi([X,Z],Y).
\end{multline*}

In the same way,
the trivial Leibniz cohomology
$HL^\bullet(\mathfrak{g},\Cmath)$
is defined from the complex
$CL^\bullet(\mathfrak{g},\Cmath) =
\left(\mathfrak{g}^*\right)^{\otimes \bullet}$
with the trivial-Leibniz-coboundary $\delta_\Cmath$ defined for $\psi \in
CL^n(\mathfrak{g},\Cmath)$ by
\begin{multline*}
(\delta_\Cmath \psi)(X_1,X_2, \cdots , X_{n+1}) =
\\
 \sum_{1\leqslant i < j \leqslant n+1} (-1)^{j+1}  \,
 \psi(X_1, \cdots, X_{i-1},[X_i,X_j],X_{i+1},\cdots, \hat{X_j}, \cdots, X_{n+1}).
\end{multline*}
If $\mathfrak{g}$ is a Lie algebra, $\delta_\Cmath$ is
the usual coboundary $d_\Cmath$
on $C^\bullet(\mathfrak{g},\Cmath) =
\bigwedge^{\bullet} \, \mathfrak{g}^*.$

For $\psi \in
CL^1(\mathfrak{g},\Cmath) =
\mathfrak{g}^* ,$
$$(\delta_{\Cmath} \psi)(X,Y) = -\psi([X,Y]).$$

For $\psi \in
CL^2(\mathfrak{g},\Cmath) =
\left(\mathfrak{g}^*\right)^{\otimes 2},$
\begin{equation*}
(\delta_\Cmath \psi)(X,Y,Z) =
-\psi([X,Y],Z)
+\psi(X,[Y,Z])
+\psi([X,Z],Y).
\end{equation*}

For computing Leibniz deformations, we need to consider the 2- and
3-dimensional cohomology cocycles.

Let $\mathbb{K}$ be a field of zero characteristic. We recall the notion of deformation
 of a Lie (Leibniz)
 algebra $\mathfrak{g}$ ($L$) over a commutative algebra base $A$ with
 identity, with a fixed augmentation $\varepsilon:{A}\rightarrow
\mathbb{K}$ and maximal ideal $\mathfrak{M}$.
 Assume $\dim(\mathfrak{M}^k/\mathfrak{M}^{k+1})<\infty$
for every $k$ (see \cite{f2, fmm}).
\begin{definition}
A deformation $\lambda$ of a Lie algebra $\mathfrak{g}$ (or a Leibniz algebra ${L}$) with base
$({A},\mathfrak{M})$, or simply with base ${A}$ is an $A$-Lie algebra (or an ${A}$-Leibniz
algebra) structure on the tensor product
${A}\otimes {\mathfrak {g}}$ (or $A\otimes L$) with the bracket $[,]_\lambda$ such that
 \[
 \varepsilon\otimes id:{A}\otimes {\mathfrak{g}}\rightarrow \mathbb{K}\otimes {\mathfrak{g}}~~ (\mbox{or}~\varepsilon\otimes id:{A}\otimes {L}\rightarrow \mathbb{K}\otimes {L})
 \]
 is an $A$-Lie algebra (${A}$-Leibniz algebra)  homomorphism.
\end{definition}
A deformation of the Lie (Leibniz) algebra $\mathfrak{g}$ ($L$) with base $A$ is called {\it infinitesimal}, {or \it first order}, if in addition to this $\mathfrak{M}^2=0$. We call a deformation of {\it order k}, if $\mathfrak{M}^{k+1}=0$. A deformation with base is called local if $A$ is a local algebra over $\mathbb{K}$, which means $A$ has a unique maximal ideal.

Suppose $A$ is a complete local algebra ( $A=\ilim_{n\rightarrow\infty}
({A}/{\mathfrak{M}^n})$), where $\mathfrak{M}$ is the maximal
ideal in $A$. Then a deformation of $\mathfrak{g}$ ($L$) with base $A$ which is obtained as the projective limit of deformations of $\mathfrak{g}$ $(L)$ with base $A/\mathfrak{M}^{n}$ is called a {\it formal deformation} of $\mathfrak{g}$ $(L)$.

\begin{definition}(see \cite{f2})
Let $C$ be a complete local algebra. A formal deformation $\eta$ of a Lie algebra $\mathfrak{g}$ (Leibniz algebra $L$) with base $C$ is called versal, if\\
(i)~for any formal deformation $\lambda$ of $\mathfrak{g}$ ($L$) with  base $A$ there exists a homomorphism $f:C \rightarrow A$ such that the deformation $\lambda$ is equivalent to $f_{*}\eta$; \\
(ii)~if $A$ satisfies the condition ${\mathfrak{M}}^2=0$, then $f$ is unique.
\end{definition}

\begin{theorem}(\cite{f2, fmm})
If $H^2(\mathfrak{g};\mathfrak{g})$ is finite dimensional, then there
exists a versal deformation of $\mathfrak{g}$ (similarly for $L$).
\end{theorem}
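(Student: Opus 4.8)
The plan is to construct a versal deformation explicitly, by the order-by-order (Fialowski--Fuchs) procedure, and then to verify the universal property of Definition 2.

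\textbf{First order.} First I would choose a basis $\mu_1,\dots,\mu_r$ of the (finite-dimensional, by hypothesis) space $H^2(\mathfrak{g},\mathfrak{g})$, represented by $2$-cocycles, introduce dual coordinates $t_1,\dots,t_r$, and set $C_1=\mathbb{K}[t_1,\dots,t_r]/(t_it_j:1\le i,j\le r)$, a local algebra with maximal ideal $\mathfrak{M}_1=(t_1,\dots,t_r)$ satisfying $\mathfrak{M}_1^2=0$. The infinitesimal deformation $\eta_1$ with base $C_1$ is given by $[x,y]_{\eta_1}=[x,y]+\sum_{i=1}^r t_i\,\mu_i(x,y)$. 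This is the \emph{universal} infinitesimal deformation: a first-order deformation with base $A$, $\mathfrak{M}^2=0$, amounts to a $2$-cocycle valued in $\mathfrak{g}\otimes\mathfrak{M}$ modulo coboundaries, i.e. to an element of $H^2(\mathfrak{g},\mathfrak{g})\otimes\mathfrak{M}\cong\mathrm{Hom}(\mathfrak{M}_1/\mathfrak{M}_1^2,\mathfrak{M})$, hence to a unique local homomorphism $f\colon C_1\to A$ with $\lambda\sim f_*\eta_1$.

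\textbf{Inductive extension and obstructions.} Suppose inductively that a deformation $\eta_k$ with complete local base $C_k$ has been built which is versal "to order $k$" (its restriction to any $A$ with $\mathfrak{M}^{k+1}=0$ is a pushforward of $\eta_k$, unique modulo $\mathfrak{M}^2$). To pass to order $k+1$ I would try a corrected bracket $[\cdot,\cdot]_{\eta_k}+\varphi$ with $\varphi$ valued in the next graded piece of the base (enlarged by new $H^2$-directions where necessary); the failure of the Jacobi identity is then a $3$-cochain whose cohomology class lies in $H^3(\mathfrak{g},\mathfrak{g})$ tensored with that graded piece. One defines $C_{k+1}$ as the quotient of the enlarged algebra by the ideal generated by the scalar components of these obstructions; over $C_{k+1}$ the obstruction vanishes, so a genuine extension $\eta_{k+1}$ exists and reduces to $\eta_k$. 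I expect the \textbf{main difficulty} to be exactly the bookkeeping here: checking that $C_{k+1}$ is again local with all $\dim(\mathfrak{M}_{k+1}^j/\mathfrak{M}_{k+1}^{j+1})<\infty$, that the construction is well-defined up to equivalence of deformations, and --- crucially --- that $\eta_{k+1}$ inherits versality to order $k+1$; this rests on a Massey-product / Harrison-type analysis of how the obstructions transform under changes of base and on the naturality of the whole process in the test base $A$.

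\textbf{Passage to the limit and versality.} Put $C=\varprojlim_k C_k$, a complete local algebra, and $\eta=\varprojlim_k\eta_k$, a formal deformation of $\mathfrak{g}$ with base $C$. Given an arbitrary formal deformation $\lambda$ with base $A=\varprojlim A/\mathfrak{M}^n$, I would produce $f\colon C\to A$ by induction on $n$: at $n=1$ the universal property above gives a unique $f_1\colon C_1\to A/\mathfrak{M}^2$ with $\lambda\equiv(f_1)_*\eta_1$; given $f_n\colon C_n\to A/\mathfrak{M}^{n+1}$ with $(f_n)_*\eta_n\sim\lambda$ modulo $\mathfrak{M}^{n+1}$, the obstruction to extending this agreement one step further lies in the same $H^3$-group, and since $C_{n+1}$ was \emph{defined} by killing precisely those obstructions, $f_n$ extends to $f_{n+1}\colon C_{n+1}\to A/\mathfrak{M}^{n+2}$ preserving the equivalence (after absorbing the residual freedom into an equivalence of deformations). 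Then $f=\varprojlim f_n$ satisfies $\lambda\sim f_*\eta$, which is condition (i); condition (ii) is just the uniqueness of $f_1$ already recorded. Finally, the Leibniz case runs verbatim with $H^\bullet(\mathfrak{g},\mathfrak{g})$ replaced by $HL^\bullet(L,L)$ and $d$ by $\delta$, since the Leibniz coboundary controls Leibniz deformations through the identical obstruction formalism --- in particular, finiteness of $HL^2(L,L)$ is what is needed there.
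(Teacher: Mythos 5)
The paper does not prove this theorem; it is quoted from \cite{f2,fmm}, and your outline reproduces essentially the construction given there (universal infinitesimal deformation with base $\Kmath\oplus H^2(\mathfrak{g},\mathfrak{g})'$, order-by-order extension with obstructions in $H^3(\mathfrak{g},\mathfrak{g})$ tensored with the relevant graded piece, and passage to the projective limit). Two small caveats: in the actual construction no new parameters are ever adjoined after the first step --- the base stays a quotient of the completed symmetric algebra on $H^2(\mathfrak{g},\mathfrak{g})'$, with later steps only imposing relations --- and the step you flag as the main difficulty (that versality is inherited under the inductive extension, controlled in the references via Harrison cohomology of the base and the base-change behaviour of obstructions) is indeed the substantive content of the cited proofs rather than routine bookkeeping, so your argument is a correct sketch of the same approach rather than a complete proof.
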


In \cite{f1} a construction for a versal deformation of a Lie algebra
was given and it was generalized to Leibniz algebras in \cite{fmm}. The
computation for a specific Leibniz algebra example was given in
\cite{fm}.

\section{Comparison of the cohomology spaces $HL^2$ and $H^2$ for a Lie algebra}

In \cite{pirashvili} the relation between Chevalley-Eilenberg
and Leibniz homology with coefficients in a right module is considered via
spectral sequence. The statements are valid in cohomological version as
well. As a corollary, one deduces
\begin{proposition}\cite{pirashvili}
Let $\mathfrak{g}$ be a Lie algebra over a field $\Kmath$ and $M$ be a right
$\mathfrak{g}$-module. If\\
 $$H_*(\mathfrak{g},M) = 0, \text{ \ then \ } HL_*(\mathfrak{g},M) = 0.$$
\end{proposition}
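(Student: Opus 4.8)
The plan is to exploit the Pirashvili spectral sequence relating Chevalley--Eilenberg homology to Leibniz homology, which is the tool referred to just before the statement. Recall that Pirashvili constructs, for a Lie algebra $\mathfrak{g}$ and a right $\mathfrak{g}$-module $M$, a spectral sequence $E^1_{p,q}$ whose $E^2$-term (or whose first column, depending on the normalization) is built out of the Chevalley--Eilenberg homology $H_*(\mathfrak{g},M)$ together with tensor powers of the adjoint module, and which converges to the Leibniz homology $HL_*(\mathfrak{g},M)$. More precisely, the relative theory fits $HL_*(\mathfrak{g},M)$ into a setup where the input data on the $E^1$ (or $E^2$) page are of the form $H_*(\mathfrak{g}, M\otimes \mathfrak{g}^{\otimes \bullet})$, all of which vanish as soon as $H_*(\mathfrak{g},-)$ kills the relevant coefficient modules.

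First I would recall the precise shape of the spectral sequence from \cite{pirashvili}: its entries are (up to reindexing) homology groups $H_{*}(\mathfrak{g}, M \otimes \mathfrak{g}^{\otimes k})$, and it converges to $HL_*(\mathfrak{g}, M)$. Second, I would observe that the hypothesis $H_*(\mathfrak{g},M)=0$ must be read as vanishing of $H_*(\mathfrak{g},-)$ on the module $M$ — but in fact, since $\mathfrak{g}^{\otimes k}$ is a free/projective-type adjustment, the hypothesis propagates: one uses that if $H_*(\mathfrak{g},M)=0$ then also $H_*(\mathfrak{g},M\otimes \mathfrak{g}^{\otimes k})=0$ for all $k\geq 0$. (This is the step that makes everything collapse.) Third, with every term on the $E^1$-page equal to zero, the spectral sequence degenerates and its abutment $HL_*(\mathfrak{g},M)$ must vanish as well. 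That yields the proposition.

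The main obstacle is the propagation step: why should $H_*(\mathfrak{g},M)=0$ force $H_*(\mathfrak{g}, M\otimes \mathfrak{g}^{\otimes k})=0$? This is not formal in general, so the honest argument has to use the specific structure of the Pirashvili complex. One clean way is to note that the columns of Pirashvili's $E^0$ (or $E^1$) page are themselves Chevalley--Eilenberg-type complexes computing $H_*(\mathfrak{g}, M\otimes \mathfrak{g}^{\otimes k})$ and, more to the point, that the relevant coefficient bimodules arising in the spectral sequence are obtained from $M$ by tensoring with the \emph{underlying vector space} of $\mathfrak{g}^{\otimes k}$ with a suitable module structure, so that a change-of-rings / Shapiro-type argument, or the fact that these are $\mathfrak{g}$-modules coinduced (resp. induced) from $M$, reduces their homology to that of $M$. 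Alternatively — and this is probably the cleanest route to write down — one avoids the issue entirely by working with the hyper(co)homology form: the Leibniz chain complex $CL_*(\mathfrak{g},M)$ carries a filtration whose associated graded is a direct sum of shifted copies of the Chevalley--Eilenberg complex $C_*(\mathfrak{g}, M\otimes\mathfrak{g}^{\otimes \bullet})$, each of which is acyclic under the hypothesis once one knows the coefficient modules have vanishing homology; then a standard spectral-sequence-of-a-filtered-complex argument gives $HL_*(\mathfrak{g},M)=0$.

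Finally, I would remark that the cohomological analogue is obtained by dualizing: applying $\mathrm{Hom}_\Kmath(-,\Kmath)$ to the whole construction (everything is over a field) turns the homology spectral sequence into a cohomology spectral sequence with $H^*(\mathfrak{g},M^*)$-type inputs converging to $HL^*(\mathfrak{g},M^*)$, so that $H^*(\mathfrak{g},M)=0$ likewise implies $HL^*(\mathfrak{g},M)=0$; this is the form actually used in the sequel of the paper.
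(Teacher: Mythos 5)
There is a genuine gap, and it sits exactly where you flag the ``main obstacle.'' First, Pirashvili's spectral sequence does not have the shape you describe: its $E^2$-term is $E^2_{pq}=HR_p(\mathfrak{g})\otimes HL_q(\mathfrak{g},M)$ --- it involves the unknown groups $HL_*(\mathfrak{g},M)$ themselves, not Chevalley--Eilenberg groups with inflated coefficients --- and it converges to the relative homology $H^{rel}_*(\mathfrak{g},M)$ of the comparison map $CL_*(\mathfrak{g},M)\to C_*(\mathfrak{g},M)$, not to $HL_*(\mathfrak{g},M)$. Second, and more seriously, the ``propagation step'' on which both of your proposed routes rest is false: $H_*(\mathfrak{g},M)=0$ does not imply $H_*(\mathfrak{g},M\otimes\mathfrak{g}^{\otimes k})=0$. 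Take $\mathfrak{g}=\mathfrak{sl}(2,\Cmath)$ and $M=\mathfrak{g}$ with the adjoint action: Whitehead's lemmas (together with complete reducibility) give $H_*(\mathfrak{g},M)=0$, yet $M\otimes\mathfrak{g}\cong V_0\oplus V_2\oplus V_4$ contains a trivial summand (the Killing form), so $H_0(\mathfrak{g},M\otimes\mathfrak{g})=(M\otimes\mathfrak{g})_{\mathfrak{g}}\neq 0$. Since this is precisely a case to which the Proposition applies, no argument routed through the vanishing of $H_*(\mathfrak{g},M\otimes\mathfrak{g}^{\otimes k})$ can succeed; and the Shapiro/coinduction escape you suggest is unavailable because $\mathfrak{g}^{\otimes k}$ with the diagonal adjoint action is in no sense an induced or free module.

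The actual deduction --- which the paper only cites here, though it recalls the ingredients later when treating the zero-center corollary --- is an induction on degree exploiting the self-referential form of the $E^2$-page together with the long exact sequence $\cdots\to H_{n+1}(\mathfrak{g},M)\to H^{rel}_{n-2}(\mathfrak{g},M)\to HL_n(\mathfrak{g},M)\to H_n(\mathfrak{g},M)\to\cdots$ relating the three theories. Under the hypothesis $H_*(\mathfrak{g},M)=0$ this gives $HL_n(\mathfrak{g},M)\cong H^{rel}_{n-2}(\mathfrak{g},M)$; one checks directly that $HL_i\cong H_i=0$ for $i\leq 1$, and if $HL_q(\mathfrak{g},M)=0$ for all $q<n$, then every $E^2_{pq}=HR_p(\mathfrak{g})\otimes HL_q(\mathfrak{g},M)$ with $p+q\leq n-2$ vanishes, whence $H^{rel}_{n-2}(\mathfrak{g},M)=0$ and so $HL_n(\mathfrak{g},M)=0$. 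No vanishing of any auxiliary coefficient module is needed. Your closing remark about dualizing over a field to obtain the cohomological version is fine once the homological statement is in place.
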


As the similar statement is true for cohomologies, it implies that
rigid Lie algebras are Leibniz rigid as well.

\medskip
Now we describe the Leibniz 2-cohomology spaces with the help of Lie
2-cohomology space of a Lie algebra $\mathfrak{g}$.\\


Recall that a  symmetric bilinear form
$B \in  S^2 \mathfrak{g}^*$
is invariant, i.e.
$B \in \left(S^2 \mathfrak{g}^*\right) ^{\mathfrak{g}}$
if and only if
$B([Z,X],Y) =-B(X,[Z,Y])  \; \forall X,Y,Z \in
\mathfrak{g}.$
The Koszul map \cite{koszul}
$\mathcal{I} \, : \,
\left(S^2 \mathfrak{g}^*\right) ^{\mathfrak{g}}
\rightarrow
\left(\bigwedge^3 \mathfrak{g}^*\right)^{\mathfrak{g}}
\subset Z^3(\mathfrak{g},\Cmath)
$
is defined by $\mathcal{I}(B)=  I_B,$ with
$I_B(X,Y,Z)=  B([X,Y],Z) \; \forall X,Y,Z \in  \mathfrak{g}.$
Since the projection
$\pi \; : \; \mathfrak{g} \rightarrow \mathfrak{g}/\mathcal{C}^2 \mathfrak{g}$
induces an isomorphism
$$\varpi \; : \; \ker{\mathcal{I}} \rightarrow
S^2  \left( \mathfrak{g}/\mathcal{C}^2 \mathfrak{g} \right)^*
,$$
(where $\mathcal{C}^2 \mathfrak{g}=[\mathfrak{g},\mathfrak{g}])$,
$\dim
\left(S^2 \mathfrak{g}^*\right) ^{\mathfrak{g}}
= \frac{p(p+1)}{2}
+\dim \text{Im\,}{\mathcal{I}},$
with $p = \dim
 H^1(\mathfrak{g},\Cmath).$
For reductive
$\mathfrak{g},$
$\dim
\left(S^2 \mathfrak{g}^*\right) ^{\mathfrak{g}}
=
\dim H^3(\mathfrak{g},\Cmath)$
.
Note also that the restriction of
$\delta_\Cmath$
to $\left(S^2 \mathfrak{g}^*\right) ^{\mathfrak{g}}$
is $-\mathcal{I}.$

\begin{definition}
$\mathfrak{g}$ is said to be
 $\mathcal{I}$-null
 (resp.
 $\mathcal{I}$-exact)
if
$\mathcal{I}=0$
 (resp.
$\text{Im\,}{\mathcal{I}} \subset B^3(\mathfrak{g},\Cmath))$.
\end{definition}
For more details on $\mathcal{I}$-null Lie algebras, see \cite{notesmagnin}.
\begin{example}
\label{exampleheisenberg}
\rm
The $(2N+1)$-dimensional complex Heisenberg Lie algebra
 $\mathcal{H}_N$
($N\geqslant 1$)
with basis $(x_i)_{1\leqslant i \leqslant 2N+1}$ and
nonzero commutation relations (with anticommutativity)
$[x_i,x_{N+i}] =x_{2N+1}$
$(1 \leqslant i \leqslant N)$ is
$\mathcal{I}$-null since, for any
$B \in \left(S^2 {\mathcal{H}_N}^*\right) ^{\mathcal{H}_N},$
$B(x_i,x_{2N+1})=B(x_i,[x_i,x_{N+i}]) = -B([x_i,x_i], x_{N+i})=0$
(similarly with $x_{N+i}$ instead of $x_i$)
$(1 \leqslant i \leqslant N),$
and $B(x_{2N+1},x_{2N+1})=
B(x_{2N+1},[x_1,x_{N+1}]) =
-B([x_1,x_{2N+1}], x_{N+1})=0.$
\end{example}

If $\mathfrak{c}$ denotes the center of
$\mathfrak{g}$,
$\mathfrak{c} \,  \otimes \left(S^2 \mathfrak{g}^*\right) ^{\mathfrak{g}}$
is the space of
invariant $\mathfrak{c}$-valued symmetric bilinear map and
we denote
$F= Id\, \otimes \mathcal{I}: \,
\mathfrak{c} \,  \otimes \left( S^2 \mathfrak{g}^*\right) ^{\mathfrak{g}}
\rightarrow C^3(\mathfrak{g}, \mathfrak{g})=
\mathfrak{g} \,  \otimes \bigwedge^3 \mathfrak{g}^*.$
Then
$\text{Im\,}F= \mathfrak{c}\, \otimes \text{Im\,}{\mathcal{I}}.$

\begin{theorem}
\label{theo2}
Let $\mathfrak{g}$ be any finite dimensional complex Lie algebra and
$ZL^2_0(\mathfrak{g},\mathfrak{g})$
(resp. $ZL^2_0(\mathfrak{g},\Cmath))$
the space of
symmetric adjoint (resp. trivial) Leibniz 2-cocycles.
\\(i)
$ZL^2(\mathfrak{g},\mathfrak{g}) \left/
\left(Z^2(\mathfrak{g},\mathfrak{g}) \oplus
ZL_0^2(\mathfrak{g},\mathfrak{g})\right) \right. \cong
\left(\mathfrak{c}\, \otimes \text{Im\,}{\mathcal{I}}\right) \cap
B^3(\mathfrak{g},\mathfrak{g}) .$
\\(ii)
$ZL^2_0(\mathfrak{g},\mathfrak{g})= \mathfrak{c}\, \otimes \ker{\mathcal{I}}.$
In particular,
$\dim ZL^2_0(\mathfrak{g},\mathfrak{g}) = c \, \frac{p(p+1)}{2} $
where
$c=\dim  \mathfrak{c} $ and
$p=
\dim  \mathfrak{g}/\mathcal{C}^2 \mathfrak{g}
=\dim H^1( \mathfrak{g},\Cmath).$
 \smallskip
\\(iii)
$HL^2(\mathfrak{g},\mathfrak{g}) \cong H^2(\mathfrak{g},\mathfrak{g}) \oplus
\left(\mathfrak{c} \otimes \ker{\mathcal{I}}\right)
\oplus \left(
\left(\mathfrak{c} \otimes \text{Im\,}{\mathcal{I}}\right) \cap B^3(\mathfrak{g},\mathfrak{g}) \right).$
\\(iv)
$ZL^2(\mathfrak{g},\Cmath) \left/
\left(Z^2(\mathfrak{g},\Cmath) \oplus ZL_0^2(\mathfrak{g},\Cmath)\right)
\right. \cong
\text{Im\,}{\mathcal{I}} \cap B^3(\mathfrak{g},\Cmath) .$
\\(v)
$ZL^2_0(\mathfrak{g},\Cmath)= \ker{\mathcal{I}}.$
\\(vi)
$HL^2(\mathfrak{g},\Cmath) \cong H^2(\mathfrak{g},\Cmath) \oplus
\ker{\mathcal{I}}
\oplus
\left(\text{Im\,}{\mathcal{I}} \cap B^3(\mathfrak{g},\Cmath) \right).$
 \smallskip
\end{theorem}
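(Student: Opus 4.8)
The plan is to reduce everything to the behaviour of the Leibniz coboundary on \emph{symmetric} $2$-cochains. Write $CL^2(\mathfrak{g},\mathfrak{g}) = C^2(\mathfrak{g},\mathfrak{g}) \oplus CL^2_0(\mathfrak{g},\mathfrak{g})$, where $CL^2_0$ denotes the symmetric $2$-cochains, and split $\psi = \psi^- + \psi^+$ accordingly. Since $\mathfrak{g}$ is Lie, $\delta$ restricts to the Chevalley--Eilenberg $d$ on $C^\bullet$; hence $\delta\psi = d\psi^- + \delta\psi^+$, $BL^2(\mathfrak{g},\mathfrak{g}) = \delta(CL^1) = B^2(\mathfrak{g},\mathfrak{g})$ (same for trivial coefficients), $Z^2(\mathfrak{g},\mathfrak{g}) \subseteq ZL^2(\mathfrak{g},\mathfrak{g})$, and $Z^2 \cap ZL^2_0 = 0$ in characteristic zero. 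Reading off $\delta\psi = d\psi^- + \delta\psi^+$: if $\psi\in ZL^2$ then $\delta\psi^+ = -d\psi^- \in B^3$; within $ZL^2$ one has $\psi \in Z^2 \oplus ZL^2_0$ iff $\delta\psi^+ = 0$; and any symmetric $\psi^+$ with $\delta\psi^+ \in B^3$ is the $+$-part of some element of $ZL^2$ (subtract a $\phi\in C^2$ with $d\phi = \delta\psi^+$). Therefore the map $\psi \mapsto \delta\psi^+$ induces isomorphisms
\[
ZL^2(\mathfrak{g},\mathfrak{g})/\big(Z^2(\mathfrak{g},\mathfrak{g})\oplus ZL^2_0(\mathfrak{g},\mathfrak{g})\big)\;\cong\;\delta(CL^2_0(\mathfrak{g},\mathfrak{g}))\cap B^3(\mathfrak{g},\mathfrak{g}),
\]
and the analogue with $\delta_\Cmath$ for trivial coefficients. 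So (i), (iv) reduce to computing $\delta(CL^2_0)\cap C^3$, resp.\ $\delta_\Cmath(CL^2_0)\cap C^3$ (then intersect with $B^3 \subseteq C^3$), and (ii), (v) amount to computing $\ker\big(\delta|_{CL^2_0}\big)$, resp.\ $\ker\big(\delta_\Cmath|_{CL^2_0}\big)$.

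The heart of the argument is the claim: \emph{if $\psi^+\in CL^2_0(\mathfrak{g},\mathfrak{g})$ and $\delta\psi^+$ is antisymmetric, then $\psi^+$ is $\mathfrak{c}$-valued; writing $\psi^+ = \sum_k c_k\otimes B_k$ with $(c_k)$ a basis of $\mathfrak{c}$ and $B_k\in S^2\mathfrak{g}^*$, each $B_k$ is invariant and $\delta\psi^+ = -\sum_k c_k\otimes I_{B_k} = -F(\psi^+)$.} I would prove it by symmetrizing and antisymmetrizing $\delta\psi^+$ in its first two arguments using the explicit $2$-cochain formula: antisymmetry forces the $(1,2)$-symmetric part to vanish, giving the relation $(S)$: $[\psi^+(X,Y),Z] = \psi^+(X,[Y,Z]) + \psi^+(Y,[X,Z])$, and $\delta\psi^+$ then equals its $(1,2)$-antisymmetric part $\tau(X,Y,Z) = [X,\psi^+(Y,Z)] - [Y,\psi^+(X,Z)] - \psi^+([X,Y],Z)$. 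A short computation using $(S)$ gives $\tau(X,Y,Z) + \tau(X,Z,Y) = 2\big(\psi^+(Y,[X,Z]) + \psi^+(Z,[X,Y])\big)$; antisymmetry of $\tau$ in its last two slots then forces $\psi^+(Y,[X,Z]) + \psi^+(Z,[X,Y]) = 0$, and feeding this back into $(S)$ collapses its right-hand side, so $[\psi^+(X,Y),Z] = 0$, i.e.\ $\psi^+$ is central-valued. Then $(S)$ reads $\psi^+(X,[Y,Z]) + \psi^+(Y,[X,Z]) = 0$, which (using symmetry of each $B_k$) is exactly invariance of the $B_k$; the bracket terms of $\delta\psi^+$ now drop out and invariance reduces $\delta\psi^+$ to $-\sum_k c_k\otimes I_{B_k}$. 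The trivial-coefficient version is easier: for symmetric $B$, antisymmetry of $\delta_\Cmath B$ in the first two arguments yields $B([Y,Z],X) + B(Y,[X,Z]) = 0$, i.e.\ $B$ invariant, whence $\delta_\Cmath B = -I_B$ by the quoted fact that $\delta_\Cmath|_{(S^2\mathfrak{g}^*)^{\mathfrak{g}}} = -\mathcal{I}$.

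Everything else is assembly. For (iv): $\operatorname{Im}\mathcal{I} \subseteq \delta_\Cmath(CL^2_0)\cap C^3$ because $\delta_\Cmath B = -I_B$ for invariant $B$, and the reverse inclusion is the lemma; hence $\delta_\Cmath(CL^2_0)\cap C^3 = \operatorname{Im}\mathcal{I}$. For (i): $\mathfrak{c}\otimes\operatorname{Im}\mathcal{I} \subseteq \delta(CL^2_0)\cap C^3$ since $\delta(c\otimes B) = c\otimes\delta_\Cmath B = -c\otimes I_B$ for central $c$ and invariant $B$, and the reverse is the lemma; hence $\delta(CL^2_0)\cap C^3 = \mathfrak{c}\otimes\operatorname{Im}\mathcal{I} = \operatorname{Im}F$. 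For (v): $\delta_\Cmath B = 0 \iff (B\text{ invariant and }I_B = 0) \iff B\in\ker\mathcal{I}$. For (ii): $\delta\psi^+ = 0$ forces, by the lemma, $\psi^+ = \sum_k c_k\otimes B_k$ with $B_k$ invariant and $\sum_k c_k\otimes I_{B_k} = 0$, so each $B_k \in \ker\mathcal{I}$, i.e.\ $\psi^+\in\mathfrak{c}\otimes\ker\mathcal{I}$; conversely $\delta(c\otimes B) = 0$ for $B\in\ker\mathcal{I}$; the dimension count then uses the isomorphism $\ker\mathcal{I}\cong S^2(\mathfrak{g}/\mathcal{C}^2\mathfrak{g})^*$ from the excerpt. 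Finally (iii) and (vi) follow from the filtration $B^2 \subseteq Z^2\oplus ZL^2_0 \subseteq ZL^2$: the subquotient $(Z^2\oplus ZL^2_0)/B^2 \cong H^2 \oplus ZL^2_0$ and the top quotient $ZL^2/(Z^2\oplus ZL^2_0)$ are identified above, and the resulting short exact sequence of vector spaces splits, yielding the stated decompositions of $HL^2(\mathfrak{g},\mathfrak{g}) = ZL^2/B^2$ and $HL^2(\mathfrak{g},\Cmath) = ZL^2(\mathfrak{g},\Cmath)/B^2$.

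I expect the main obstacle to be the lemma above, specifically extracting from the single hypothesis ``$\delta\psi^+$ antisymmetric'' enough identities to conclude that $\psi^+$ is central-valued; once that is done, everything reduces either to the explicit formula for $\delta$ together with invariance of symmetric bilinear forms, or to routine bookkeeping with the decomposition $CL^2 = C^2\oplus CL^2_0$ and the splitting of short exact sequences over a field. I would also verify the elementary points that $\delta$ agrees with $d$ on antisymmetric cochains (so that $BL^2 = B^2$ and $Z^2\subseteq ZL^2$) and that $Z^2\cap ZL^2_0 = 0$, but these are immediate in characteristic zero.
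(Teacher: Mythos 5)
Your proposal is correct and follows essentially the same route as the paper: decompose $\psi=\psi^-+\psi^+$ into antisymmetric and symmetric parts, extract from the antisymmetry of $\delta\psi^+$ (via permutations of the arguments) that $\psi^+$ is $\mathfrak{c}$-valued with invariant components and $\delta\psi^+=-F(\psi^+)$, and then identify $ZL^2/(Z^2\oplus ZL^2_0)$ with $\operatorname{Im}F\cap B^3$ and assemble (iii),(vi) from $BL^2=B^2$. The only cosmetic difference is that the paper organizes the permutation bookkeeping through the cyclic permutation of $(X,Y,Z)$ to get $u=v=w=0$ directly, whereas you use the two transpositions plus back-substitution into your relation $(S)$; the identities obtained are the same.
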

\begin{proof}
(i) The Leibniz 2-cochain space
$CL^2(\mathfrak{g},\mathfrak{g}) =
\mathfrak{g}
\otimes
\left(\mathfrak{g}^*\right)^{\otimes 2}
$
decomposes as
$\left(
\mathfrak{g}
\otimes
\bigwedge^2 \mathfrak{g}^*  \right) \;
 \oplus \; \left(
\mathfrak{g}
\otimes
 S^2 \,\mathfrak{g}^*  \right)$
 with
 $
\mathfrak{g}
\otimes
 S^2 \,\mathfrak{g}^*  $
 the space of symmetric elements in
$CL^2(\mathfrak{g},\mathfrak{g}).$
By definition of the Leibniz coboundary $\delta$, one has for
$\psi \in CL^2(\mathfrak{g},\mathfrak{g})$ and $X,Y,Z \in \mathfrak{g}$
\begin{equation}
\label{leibnizcocycle}
(\delta \psi)(X,Y,Z) = u+v+w+r+s+t
\end{equation}
with $u=[X,\psi(Y,Z)],\;
v=[\psi(X,Z),Y], \;
w=-[\psi(X,Y),Z],\;
r=-\psi([X,Y],Z), \;
$
$s=\psi(X,[Y,Z]),  \;
t=\psi([X,Z],Y).$
$\delta$ coincides with the usual coboundary operator on
$
\mathfrak{g}
\otimes
\bigwedge^2 \mathfrak{g}^*   .$
Now, let $\psi = \psi_1 + \psi_0 \in
CL^2(\mathfrak{g},\mathfrak{g})$
, $\psi_1 \in
\mathfrak{g}
\otimes
\bigwedge^2 \mathfrak{g}^*   , \;
 \psi_0 \in
\mathfrak{g}
\otimes
 S^2 \,\mathfrak{g}^*  .$
\\
\par
 Suppose
$\psi \in ZL^2(\mathfrak{g},\mathfrak{g}):$
$\delta \psi = 0 = \delta\psi_1 + \delta \psi_0 =  d \psi_1 + \delta \psi_0.$
Then $\delta \psi_0 = -d \psi_1
\in
\mathfrak{g}
\otimes
\bigwedge^3 \mathfrak{g}^*  $ is antisymmetric.
Then permuting $X$ and $Y$ in formula
(\ref{leibnizcocycle}) for $\psi_0$ yields
$(\delta \psi_0)(Y,X,Z) = -v - u + w - r + t + s.$
As
$\delta \psi_0$ is antisymmetric, we get
\begin{equation}
\label{first}
w + s + t = 0.
\end{equation}
Now, the circular permutation $(X,Y,Z)$ in
(\ref{leibnizcocycle}) for $\psi_0$ yields
$(\delta \psi_0)(Y,Z,X) = -v - w + u - s - t + r.$
Again, by antisymmetry,
\begin{equation}
\label{second}
v +w + s + t = 0,
\end{equation}
i.e.
$(\delta \psi_0)(X,Y,Z) = u + r.$
From (\ref{first}) and (\ref{second}), $v = 0.$
Applying twice
the circular permutation $(X,Y,Z)$ to $v$, we get
first $w=0$ and then $u=0.$
Hence $(\delta \psi_0)(X,Y,Z) = r
=-\psi_0([X,Y],Z).$
Note first that $u=0$ reads
$[X,\psi_0(Y,Z)] = 0.$
As $X,Y,Z$ are arbitrary, $\psi_0$ is
$\mathfrak{c}$-valued.
Now the permutation  of $Y$ and $Z$ changes
$r$ to $- t=s$ (from (\ref{second})).
Again, by antisymmetry
of $\delta \psi_0,$
$r=t=-s.$
As $X,Y,Z$ are arbitrary, one gets
$\psi_0 \in \mathfrak{c} \,  \otimes \left(S^2 \mathfrak{g}^*\right) ^{\mathfrak{g}}.$
Now $F(\psi_0)= - r = -\delta\psi_0 = d \psi_1 \in
B^3(\mathfrak{g},\mathfrak{g} ).$
Hence
$$\psi_0 \in ZL^2_0(\mathfrak{g},\mathfrak{g})
\Leftrightarrow
F(\psi_0)=0
\Leftrightarrow
\psi_1 \in Z^2(\mathfrak{g},\mathfrak{g})
\Leftrightarrow
\psi_0 \in \mathfrak{c} \otimes \ker{\mathcal{I}}.$$
Consider now the linear map
$\Phi \; : \,
ZL^2(\mathfrak{g},\mathfrak{g}) \rightarrow
F^{-1}(B^3(\mathfrak{g},\mathfrak{g})) \left/\; \ker{F} \right.$
defined by
$\psi \mapsto [\psi_0] \,(\text{mod} \ker{F}).$
$\Phi$ is onto: for any $[\varphi_0] \in
F^{-1}(B^3(\mathfrak{g},\mathfrak{g})) \left/\; \ker{F} \right.$
,
$\varphi_0 \in
\mathfrak{c}\, \otimes  \left(S^2\mathfrak{g^*}\right)^{\mathfrak{g}},$
one has $F(\varphi_0) \in B^3(\mathfrak{g},\mathfrak{g}),$
hence $F(\varphi_0)= d\varphi_1,$ $\varphi_1 \in C^2(\mathfrak{g},\mathfrak{g}),$
and then $\varphi=\varphi_0 +\varphi_1$ is a Leibniz cocycle such that
$\Phi( \varphi) =[\varphi_0].$
Now $\ker{\Phi}=
Z^2(\mathfrak{g},\mathfrak{g}) \oplus ZL_0^2(\mathfrak{g},\mathfrak{g}),$
since condition $[\psi_0] =[0]$ reads $\psi_0 \in \ker{F}$ which is equivalent
to $\psi \in
Z^2(\mathfrak{g},\mathfrak{g}) \oplus ZL_0^2(\mathfrak{g},\mathfrak{g}).$
Hence $\Phi$ yields an isomorphism
$ZL^2(\mathfrak{g},\mathfrak{g}) \left/
\left(Z^2(\mathfrak{g},\mathfrak{g}) \oplus ZL_0^2(\mathfrak{g},\mathfrak{g})\right)
\right. \cong
F^{-1}(B^3(\mathfrak{g},\mathfrak{g})) \left/ \;\ker{F} \right. .$
The latter is isomorphic to
$ \text{Im\,}F  \cap B^3(\mathfrak{g},\mathfrak{g})
\cong
\left(\mathfrak{c} \otimes \text{Im\,}{\mathcal{I}}\right) \cap B^3(\mathfrak{g},\mathfrak{g}).$
\\(ii)  Results from the invariance of
 $\psi_0 \in ZL^2_0(\mathfrak{g},\mathfrak{g}).$
\\(iii)
Results immediately from (i), (ii) since
 $BL^2(\mathfrak{g},\mathfrak{g})=B^2(\mathfrak{g},\mathfrak{g})$
 as the Leibniz differential on
$CL^1(\mathfrak{g},\mathfrak{g}) =
   \mathfrak{g}^* \otimes \mathfrak{g} = C^1(\mathfrak{g},\mathfrak{g})$
coincides with the usual one.
\\
(iv)-(vi) are similar.
\end{proof}

\begin{remark}
\rm
Since
$ \ker{\mathcal{I}}\oplus \left(\text{Im\,}{\mathcal{I}} \cap B^3(\mathfrak{g},\Cmath) \right) \cong \ker{h}$ where $h$ denotes $\mathcal{I}$ composed with the projection of
$Z^3(\mathfrak{g},\Cmath)$
onto $H^3(\mathfrak{g},\Cmath),$ the result (vi) is the same as
in \cite{hpl}.
\end{remark}

\begin{remark}
\rm
Any supplementary subspace to
$Z^2(\mathfrak{g},\Cmath) \oplus ZL_0^2(\mathfrak{g},\Cmath)$ in
$ZL^2(\mathfrak{g},\Cmath)$
consists of \textit{coupled} Leibniz 2-cocycles, i.e. the nonzero elements
have the property that their
symmetric and antisymmetric
parts are not cocycles.
To get such a supplementary subspace, pick any supplementary subspace $W$ to
$\ker{\mathcal{I}}$ in
$\left(S^2 \mathfrak{g}^*\right) ^{\mathfrak{g}}$
and take $\mathcal{C} =\left\{ B +\omega\, ; B \in W \cap {\mathcal{I}}^{-1}
(B^3(\mathfrak{g},\Cmath)), I_B=d\omega \right\}.$
\end{remark}
\begin{definition}
$\mathfrak{g}$ is said to be adjoint (resp. trivial) $ZL^2$-uncoupling
if
$$
\left(\mathfrak{c}\, \otimes \text{Im\,}{\mathcal{I}}\right) \cap
B^3(\mathfrak{g},\mathfrak{g}) =\{0\} \left( \text{resp. Im\,}{\mathcal{I}}
\cap B^3(\mathfrak{g},\Cmath) =\{0\}\right).
$$
\end{definition}

The class of adjoint
$ZL^2$-uncoupling  Lie algebras is rather extensive since it contains
all zero-center Lie algebras
and all $\mathcal{I}$-null Lie algebras.
For non zero-center, adjoint $ZL^2$-uncoupling implies trivial $ZL^2$-uncoupling,Adjoint $ZL^2$-uncoupling implies trivial $ZL^2$-uncoupling,
since
$\mathfrak{c}\, \otimes \left( \text{Im\,}{\mathcal{I}} \cap B^3(\mathfrak{g},\Cmath)\right) \subset
\left(\mathfrak{c}\, \otimes \text{Im\,}{\mathcal{I}}\right) \cap B^3(\mathfrak{g},\mathfrak{g}).$
The reciprocal holds obviously true
for $\mathcal{I}$-exact Lie algebras.
However we do not know
if it holds true in general
(e.g. we do not know of a nilpotent Lie algebra which is not
 $\mathcal{I}$-exact).

\begin{corollary}
(i)
$HL^2(\mathfrak{g},\mathfrak{g}) \cong H^2(\mathfrak{g},\mathfrak{g}) \oplus
\left(\mathfrak{c} \otimes \ker{\mathcal{I}}\right)$
if and only if
$\mathfrak{g}$ is adjoint $ZL^2$-uncoupling.
\\(ii)
$HL^2(\mathfrak{g},\Cmath) \cong H^2(\mathfrak{g},\Cmath) \oplus
\ker{\mathcal{I}}$
if and only if
$\mathfrak{g}$ is trivial $ZL^2$-uncoupling.
\end{corollary}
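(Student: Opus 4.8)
The plan is to deduce both statements directly from Theorem~\ref{theo2} — parts (iii) and (vi) — together with the Definition of $ZL^2$-uncoupling, the only point requiring a moment of care being that the equivalences are phrased with ``$\cong$'' rather than literal equality, so the ``only if'' direction should be handled by a dimension count rather than by matching subspaces on the nose.

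For (i), the ``if'' direction is immediate: if $\mathfrak{g}$ is adjoint $ZL^2$-uncoupling then, by definition, $\left(\mathfrak{c}\otimes\text{Im\,}{\mathcal{I}}\right)\cap B^3(\mathfrak{g},\mathfrak{g})=\{0\}$, so the third summand in the decomposition of Theorem~\ref{theo2}(iii) vanishes and $HL^2(\mathfrak{g},\mathfrak{g})\cong H^2(\mathfrak{g},\mathfrak{g})\oplus\left(\mathfrak{c}\otimes\ker{\mathcal{I}}\right)$. For the converse I would pass to dimensions. Since Theorem~\ref{theo2}(iii) is a direct sum decomposition of finite-dimensional vector spaces,
$$\dim HL^2(\mathfrak{g},\mathfrak{g}) = \dim H^2(\mathfrak{g},\mathfrak{g}) + \dim\left(\mathfrak{c}\otimes\ker{\mathcal{I}}\right) + \dim\bigl(\left(\mathfrak{c}\otimes\text{Im\,}{\mathcal{I}}\right)\cap B^3(\mathfrak{g},\mathfrak{g})\bigr).$$
If $HL^2(\mathfrak{g},\mathfrak{g})\cong H^2(\mathfrak{g},\mathfrak{g})\oplus\left(\mathfrak{c}\otimes\ker{\mathcal{I}}\right)$, then comparing dimensions forces $\dim\bigl(\left(\mathfrak{c}\otimes\text{Im\,}{\mathcal{I}}\right)\cap B^3(\mathfrak{g},\mathfrak{g})\bigr)=0$, i.e. that space is $\{0\}$, which is exactly the definition of adjoint $ZL^2$-uncoupling. (Here one also uses $\text{Im\,}F=\mathfrak{c}\otimes\text{Im\,}{\mathcal{I}}$, noted before Theorem~\ref{theo2}, to identify the space in the Definition with the third summand of (iii).)

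Part (ii) is proved in exactly the same way, with Theorem~\ref{theo2}(iii) replaced by (vi), the adjoint objects replaced by their trivial counterparts, and $\mathfrak{c}\otimes(-)$ replaced by $(-)$; the uncoupling condition that surfaces is $\text{Im\,}{\mathcal{I}}\cap B^3(\mathfrak{g},\Cmath)=\{0\}$. I do not anticipate any genuine obstacle: the corollary is essentially a reformulation of Theorem~\ref{theo2}, and the single subtlety — that ``$\cong$'' must be read via finite-dimensionality as equality of dimensions — is routine.
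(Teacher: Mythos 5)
Your proof is correct and follows the same route the paper intends: the corollary is stated without proof as an immediate consequence of Theorem~2 (iii) and (vi), with the ``if'' direction coming from the vanishing of the third summand and the ``only if'' direction from comparing dimensions in the finite-dimensional direct sum. Your explicit handling of the ``$\cong$'' versus equality subtlety via a dimension count is exactly the right (and only) point needing care.
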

\par
\begin{corollary}
For any Lie algebra
$\mathfrak{g}$ with trivial center
$\mathfrak{c}= \{0\},$
 $HL^2(\mathfrak{g},\mathfrak{g})=H^2(\mathfrak{g},\mathfrak{g}).$
 \end{corollary}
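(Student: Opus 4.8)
The plan is to read the result straight off Theorem~\ref{theo2}(iii). Putting $\mathfrak{c} = \{0\}$, both summands $\mathfrak{c} \otimes \ker{\mathcal{I}}$ and $\left(\mathfrak{c} \otimes \text{Im\,}{\mathcal{I}}\right) \cap B^3(\mathfrak{g},\mathfrak{g})$ are zero, so the isomorphism of (iii) collapses to $HL^2(\mathfrak{g},\mathfrak{g}) \cong H^2(\mathfrak{g},\mathfrak{g})$. (Equivalently, a zero-center Lie algebra is adjoint $ZL^2$-uncoupling, so the same conclusion drops out of the first Corollary above combined with $\mathfrak{c} = \{0\}$.) So at the level of isomorphism there is genuinely nothing to do.

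The only point needing a little care is upgrading this isomorphism to the stated \emph{equality} of cohomology spaces, and for that I would revisit the computation inside the proof of Theorem~\ref{theo2}(i). There, for $\psi = \psi_1 + \psi_0 \in ZL^2(\mathfrak{g},\mathfrak{g})$ with $\psi_1 \in \mathfrak{g} \otimes \bigwedge^2 \mathfrak{g}^*$ and $\psi_0 \in \mathfrak{g} \otimes S^2 \mathfrak{g}^*$, it is shown in particular that $u = [X,\psi_0(Y,Z)] = 0$ for all $X,Y,Z$, i.e. $\psi_0$ is $\mathfrak{c}$-valued; when $\mathfrak{c} = \{0\}$ this forces $\psi_0 = 0$, hence $ZL^2(\mathfrak{g},\mathfrak{g}) = Z^2(\mathfrak{g},\mathfrak{g})$ as subspaces of the cochain space. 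Since $BL^2(\mathfrak{g},\mathfrak{g}) = B^2(\mathfrak{g},\mathfrak{g})$ (the Leibniz differential on $CL^1(\mathfrak{g},\mathfrak{g})$ coinciding with the ordinary Lie one, as recalled in the proof of (iii)), we conclude $HL^2(\mathfrak{g},\mathfrak{g}) = ZL^2/BL^2 = Z^2/B^2 = H^2(\mathfrak{g},\mathfrak{g})$ as identical quotient spaces. I expect no real obstacle: the argument is entirely contained in the proof of Theorem~\ref{theo2}, and the sole subtlety is to phrase the vanishing of $\psi_0$ so that it gives the literal equality of spaces rather than merely the isomorphism furnished by (iii).
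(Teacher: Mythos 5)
Your proposal is correct, and it follows what is clearly the intended derivation: the statement is placed as a corollary of Theorem \ref{theo2}, and with $\mathfrak{c}=\{0\}$ both extra summands $\mathfrak{c}\otimes\ker{\mathcal{I}}$ and $\left(\mathfrak{c}\otimes\text{Im\,}{\mathcal{I}}\right)\cap B^3(\mathfrak{g},\mathfrak{g})$ vanish. Your extra step -- going back into the proof of Theorem \ref{theo2}(i) to see that $\psi_0$ is $\mathfrak{c}$-valued, hence zero, so that $ZL^2(\mathfrak{g},\mathfrak{g})=Z^2(\mathfrak{g},\mathfrak{g})$ and $BL^2(\mathfrak{g},\mathfrak{g})=B^2(\mathfrak{g},\mathfrak{g})$ literally -- is a worthwhile refinement, since it turns the isomorphism of (iii) into the equality of spaces that the corollary actually asserts. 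Note, however, that the only written-out proof the paper attaches to this statement is a genuinely different one: following the remark that the fact ``also follows'' from the cohomological version of Theorem A of Pirashvili, the authors invoke the spectral sequence $E^2_{pq}=HR_p(\mathfrak{g})\otimes HL_q(\mathfrak{g},M)\Rightarrow H^{rel}_{p+q}(\mathfrak{g},M)$, observe that triviality of the center forces $E^2_{00}=0$ and hence $H^{rel}_0(\mathfrak{g},\mathfrak{g})=0$, and then read off $HL_2=H_2$ from the associated exact sequence. That route is shorter if one grants the spectral sequence machinery, but it only yields the identification in degree $2$ abstractly; your route is elementary, stays entirely within the explicit cocycle decomposition already developed in the paper, and gives the on-the-nose equality of cocycle and coboundary spaces. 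Either argument is complete; there is no gap in yours.
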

\begin{remark}
This fact also follows from the cohomological version of Theorem A in
\cite{pirashvili}.
\end{remark}

\begin{proof}
Let $\mathfrak{g}$ be a Lie algebra and $M$ be a right
$\mathfrak{g}$-module. Consider the product map $m :
\mathfrak{g}\otimes\Lambda^n{\mathfrak{g}} \longrightarrow
\Lambda^{n+1}$ in the exterior algebra. This map yields an epimorphism
of chain complexes

$C_*(\mathfrak{g},\mathfrak{g}) \longrightarrow
C_i(\mathfrak{g},\Kmath)[-1]$,

where $C_*(\mathfrak{g},\Kmath)$ is the reduced chain complex:

$C_0(\mathfrak{g},\Kmath) = 0$,

$C_i(\mathfrak{g},\Kmath) = C_i(\mathfrak{g},\Kmath) \text{ \ for \ } i > 0$.
Define the chain complex $CR_*(\mathfrak{g})$ such that
$CR_*(\mathfrak{g}[1]$ is the kernel of the epimorphism
$C_*(\mathfrak{g}, \mathfrak{g}) \longrightarrow
C_*(\mathfrak{g},\Kmath)[-1]$. Denote the cohomology of $CR_*(\mathfrak{g})$
by $HR_*(\mathfrak{g})$.

Let us recall Theorem A in \cite{pirashvili}.\\
There exists a spectral sequence
$$
E^2_{pq} = HR_p(\mathfrak{g} \otimes HL_q(\mathfrak{g}, M)
\Longrightarrow  H^{rel}_{p+q}(\mathfrak{g}, M).
$$

\medskip
As the center of our Lie algebra is $0$, it follows that $E^2_{00}=0$,
and so we get $H^{rel}_0(\mathfrak{g},\mathfrak{g})=0$.

But then from the exact sequence in \cite{pirashvili}
$$
0 \leftarrow H_2(\mathfrak{g},M) \leftarrow HL_2(\mathfrak{g},M)
\leftarrow H^{rel}_0(\mathfrak{g},M) \leftarrow H_3(\mathfrak{g},M)
\leftarrow ...
$$

we get
$$
HL_2(\mathfrak{g},M) = H_2(\mathfrak{g},M).
$$
\end{proof}

\begin{corollary}
For any
reductive algebra
Lie
$\mathfrak{g}$ with  center
$\mathfrak{c},$
$HL^2(\mathfrak{g},\mathfrak{g}) =
H^2(\mathfrak{g},\mathfrak{g}) \oplus \,
 \left(\mathfrak{c} \, \otimes S^2 \mathfrak{c}^* \right),$
and
$\dim H^2(\mathfrak{g},\mathfrak{g}) =  \frac{c^2(c-1)}{2}$
 with $c=\dim \mathfrak{c}.$
\end{corollary}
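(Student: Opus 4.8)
The plan is to read the decomposition off Theorem~\ref{theo2}(iii), by checking, for $\mathfrak g$ reductive, that $\mathfrak c\otimes\ker\mathcal I\cong\mathfrak c\otimes S^2\mathfrak c^*$ and that the third summand $\bigl(\mathfrak c\otimes\operatorname{Im}\mathcal I\bigr)\cap B^3(\mathfrak g,\mathfrak g)$ is zero, i.e.\ that $\mathfrak g$ is adjoint $ZL^2$-uncoupling; the value of $\dim H^2(\mathfrak g,\mathfrak g)$ is then obtained by a separate computation.

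Write $\mathfrak g=\mathfrak s\oplus\mathfrak c$ with $\mathfrak s=[\mathfrak g,\mathfrak g]$ semisimple, so that $\mathcal C^2\mathfrak g=\mathfrak s$ and $\mathfrak g/\mathcal C^2\mathfrak g\cong\mathfrak c$; in particular $p=\dim H^1(\mathfrak g,\Cmath)=\dim\mathfrak c=c$, and the isomorphism $\varpi$ recalled above gives $\ker\mathcal I\cong S^2\mathfrak c^*$, hence $\mathfrak c\otimes\ker\mathcal I\cong\mathfrak c\otimes S^2\mathfrak c^*$. Since $(\mathfrak s^*)^{\mathfrak s}=0$ one has $(S^2\mathfrak g^*)^{\mathfrak g}=(S^2\mathfrak s^*)^{\mathfrak s}\oplus S^2\mathfrak c^*$, and because $[\mathfrak g,\mathfrak g]=\mathfrak s$ while every $B\in S^2\mathfrak c^*$ vanishes on $\mathfrak s$, the Koszul map $\mathcal I$ kills $S^2\mathfrak c^*$; thus $\operatorname{Im}\mathcal I=\mathcal I\bigl((S^2\mathfrak s^*)^{\mathfrak s}\bigr)$ is a space of $\mathfrak g$-invariant $3$-forms, each of them the pull-back along $\mathfrak g\to\mathfrak g/\mathfrak c\cong\mathfrak s$ of the Cartan $3$-form of $\mathfrak s$ associated with $B$.

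Next I would prove that $\mathfrak g$ is adjoint $ZL^2$-uncoupling. As $\mathfrak c$ is central and $\mathfrak s$ is an ideal, the adjoint coboundary $d$ on $C^2(\mathfrak g,\mathfrak g)$ respects the splitting of values into their $\mathfrak s$- and $\mathfrak c$-parts, and on the $\mathfrak c$-part it coincides with $\operatorname{id}_{\mathfrak c}\otimes d_\Cmath$; consequently $\bigl(\mathfrak c\otimes\bigwedge^3\mathfrak g^*\bigr)\cap B^3(\mathfrak g,\mathfrak g)=\mathfrak c\otimes B^3(\mathfrak g,\Cmath)$, and the claim reduces to $\operatorname{Im}\mathcal I\cap B^3(\mathfrak g,\Cmath)=\{0\}$. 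But if some $I_B\in\operatorname{Im}\mathcal I$ equals $d_\Cmath\omega$ with $\omega\in\bigwedge^2\mathfrak g^*$, then restricting to $\mathfrak s$ exhibits the Cartan $3$-form of $\mathfrak s$ as a coboundary on $\mathfrak s$, which by the classical injectivity of the Koszul map $(S^2\mathfrak s^*)^{\mathfrak s}\hookrightarrow H^3(\mathfrak s,\Cmath)$ (equivalently, because nonzero invariant cochains of a reductive Lie algebra are never coboundaries) forces $B=0$ and $I_B=0$. Theorem~\ref{theo2}(iii) then yields $HL^2(\mathfrak g,\mathfrak g)\cong H^2(\mathfrak g,\mathfrak g)\oplus\bigl(\mathfrak c\otimes S^2\mathfrak c^*\bigr)$.

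It remains to compute $H^2(\mathfrak g,\mathfrak g)$. Decomposing the adjoint module as $\mathfrak g=\mathfrak s\oplus\mathfrak c$ — $\mathfrak s$ being the adjoint $\mathfrak s$-module with $\mathfrak c$ acting trivially, and $\mathfrak c$ the trivial module — and applying the Künneth formula for $\mathfrak g=\mathfrak s\oplus\mathfrak c$ together with Whitehead's vanishing theorems ($H^\bullet(\mathfrak s,\mathfrak s)=0$, $H^1(\mathfrak s,\Cmath)=H^2(\mathfrak s,\Cmath)=0$), one obtains $H^2(\mathfrak g,\mathfrak s)=0$ and $H^2(\mathfrak g,\mathfrak c)\cong\mathfrak c\otimes H^2(\mathfrak g,\Cmath)\cong\mathfrak c\otimes\bigwedge^2\mathfrak c^*$; hence $H^2(\mathfrak g,\mathfrak g)\cong\mathfrak c\otimes\bigwedge^2\mathfrak c^*$, of dimension $c\binom c2=\frac{c^2(c-1)}2$. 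I expect the only point requiring real work to be the vanishing of the coupled summand, which hinges on invariant cochains of a reductive Lie algebra failing to be coboundaries; the rest is Künneth bookkeeping plus the structure theory already recalled.
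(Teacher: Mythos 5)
Your proof is correct and follows essentially the same route as the paper: establish that a reductive $\mathfrak{g}$ is adjoint $ZL^2$-uncoupling via the non-exactness of the Cartan $3$-forms of the semisimple part, identify $\ker{\mathcal{I}}$ with $S^2\mathfrak{c}^*$, and compute $H^2(\mathfrak{g},\mathfrak{g})$ by K\"unneth and Whitehead (the paper invokes a vanishing result of Guichardet for $H^2(\mathfrak{g},\mathfrak{s}_i)$ where you use $H^\bullet(\mathfrak{s},\mathfrak{s})=0$, and it splits $\mathfrak{s}$ into simple ideals to reduce to Killing forms where you cite injectivity of the Koszul map directly). Your explicit reduction of $\left(\mathfrak{c}\otimes\bigwedge^3\mathfrak{g}^*\right)\cap B^3(\mathfrak{g},\mathfrak{g})$ to $\mathfrak{c}\otimes B^3(\mathfrak{g},\Cmath)$ is a detail the paper leaves implicit, and is a welcome clarification rather than a divergence.
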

\begin{proof}
$\mathfrak{g} = \mathfrak{s} \oplus \mathfrak{c}$
with $\mathfrak{s} = \mathcal{C}^2  \mathfrak{g}$ semisimple.
We first prove that
$\mathfrak{g}$ is adjoint $ZL^2$-uncoupling.
$ \mathfrak{c} \,  \otimes \left(S^2 \mathfrak{g}^*\right)^{\mathfrak{g}}
= \left( \mathfrak{c} \,  \otimes \left(S^2 \mathfrak{s}^*\right)
^{\mathfrak{s}} \right)
\oplus \left(\mathfrak{c} \,  \otimes S^2 \mathfrak{c}^*\right)
= c \, \left( S^2 \mathfrak{s}^*\right) ^{\mathfrak{s}}
\oplus c \, \left( S^2 \mathfrak{c}^*\right).$
Suppose first
 $\mathfrak{s} $
 simple. Then any bilinear symmetric invariant form on
 $\mathfrak{s} $   is some multiple of the Killing form $K.$
 Hence
$ \mathfrak{c} \,  \otimes \left(S^2 \mathfrak{g}^*\right)^{\mathfrak{g}}
= c \, (\Cmath K)
\oplus c \, \left( S^2 \mathfrak{c}^*\right).$
For any $\psi_0 \in  \mathfrak{c} \,  \otimes\left( S^2 \mathfrak{g}^*\right)^{\mathfrak{g}},$
$F({\psi_0})$ is then some linear combination of copies of $I_K.$
As is well-known, $I_K$  is no coboundary. Hence  if we suppose that
$F({\psi_0})$ is a coboundary, necessarily
$F({\psi_0})=0.$
$\mathfrak{g}$ is adjoint $ZL^2$-uncoupling
when $\mathfrak{s} $  is  simple.
Now, if
 $\mathfrak{s} $  is not simple,
 $\mathfrak{s} $ can be decomposed as a direct sum
 $ \mathfrak{s}_1 \oplus \cdots \oplus   \mathfrak{s}_m$ of simple ideals of
 $\mathfrak{s}. $
 Then
$  \left( S^2 \mathfrak{s}^*\right) ^{\mathfrak{s}} =
\bigoplus_{i=1}^{m} \,    \left( S^2 \mathfrak{s_i}^*\right) ^{\mathfrak{s_i}}
=
\bigoplus_{i=1}^{m} \,    \Cmath \, K_i$
($K_i $ Killing form of
 $\mathfrak{s}_i. $)
The same reasoning then applies and shows that
$\mathfrak{g}$ is adjoint $ZL^2$-uncoupling.
From (ii) in theorem \ref{theo2},
$ZL^2_0(\mathfrak{g},\mathfrak{g}) =  \mathfrak{c} \, \otimes S^2 \mathfrak{c}^* .$
Now,
$\mathfrak{g} = \mathfrak{s} \oplus \mathfrak{c}$
with $\mathfrak{s} = \mathcal{C}^2  \mathfrak{g}$ semisimple.
 $\mathfrak{s} $ can be decomposed as a direct sum
 $ \mathfrak{s}_1 \oplus \cdots \oplus   \mathfrak{s}_m$ of ideals of
 $\mathfrak{s} $
 hence of $\mathfrak{g}. $  Then
$H^2(\mathfrak{g},\mathfrak{g}) =
\bigoplus_{i=1}^{m} H^2(\mathfrak{g},\mathfrak{s}_i)\,  \oplus \,  H^2(\mathfrak{g},\mathfrak{c}).$
As
 $\mathfrak{s}_i $ is a nontrivial
$\mathfrak{g}$-module,
$H^2(\mathfrak{g},\mathfrak{s}_i) = \{0\}$ (\cite{guichardet}, Prop. 11.4, page 154).
Hence
$H^2(\mathfrak{g},\mathfrak{g}) =
H^2(\mathfrak{g},\mathfrak{c}) = c\, H^2(\mathfrak{g},\Cmath).$
By the K\"{u}nneth formula and
Whitehead's lemmas,
$H^2(\mathfrak{g},\Cmath) =
\left(H^2(\mathfrak{s},\Cmath) \otimes H^0(\mathfrak{c},\Cmath) \right) \, \oplus
\left(H^1(\mathfrak{s},\Cmath) \otimes H^1(\mathfrak{c},\Cmath) \right) \, \oplus
\left(H^0(\mathfrak{s},\Cmath) \otimes H^2(\mathfrak{c},\Cmath) \right)
= $
$H^0(\mathfrak{s},\Cmath) \otimes H^2(\mathfrak{c},\Cmath)
= \Cmath \otimes H^2(\mathfrak{c},\Cmath). $
Hence
\linebreak[4]
$\dim H^2(\mathfrak{g},\mathfrak{g}) =  \frac{c^2(c-1)}{2}.$
\end{proof}
\par

\section{Examples}

For $\omega, \pi \in  \mathfrak{g}^*,$ $\odot $
stands for  the symmetric product
 $\omega \odot \pi =  \omega \otimes \pi +  \pi \otimes  \omega.$
\begin{example}
\rm
For
$\mathfrak{g} = \mathfrak{gl}(n),$
$$HL^2(\mathfrak{g},\mathfrak{g}) =
ZL^2_0(\mathfrak{g},\mathfrak{g}) = \Cmath \,\left( x_{n^2} \oplus (
\omega^{n^2} \odot \omega^{n^2} )\right),$$  where
$(x_i)_{1\leqslant i \leqslant n^2}$ is a basis of
$\mathfrak{g}$
such that
$(x_i)_{1\leqslant i \leqslant n^2-1}$ is a basis of
$ \mathfrak{sl}(n)$ and $x_{n^2}$ is the identity matrix, and
$(\omega^i)_{1\leqslant i \leqslant n^2}$ the dual basis
to $(x_i)_{1\leqslant i \leqslant n^2}.$
Hence there is a
unique Leibniz deformation of
$\mathfrak{gl}(n).$
\end{example}

\begin{corollary}
Let $\mathfrak{g}=\mathcal{H}_N$ be the $(2N+1)$-dimensional complex Heisenberg Lie algebra
($N\geqslant 1$)
as in example \ref{exampleheisenberg}.
\\(i)
 $ZL^2_0(
{\mathcal{H}}_N,{\mathcal{H}}_N
 )$
 has basis
 $(x_{2N+1} \otimes (\omega^i \odot \omega^{j}))_{1 \leqslant i \leqslant j \leqslant 2N}$
 with
$(\omega^i)_{1\leqslant i \leqslant 2N+1}$ the dual basis to
$(x_i)_{1\leqslant i \leqslant 2N+1}$
($\odot $
stands for  the symmetric product
 $\omega^i \odot \omega^j =
 \omega^i \otimes \omega^j +
 \omega^j \otimes  \omega^i).$
 \\(ii)
 $$\dim ZL^2_0({\mathcal{H}}_N , {\mathcal{H}}_N ) = \dim B^2(
{\mathcal{H}}_N,{\mathcal{H}}_N
 ) = N(2N+1);$$
 $$\dim HL^2(
{\mathcal{H}}_N,{\mathcal{H}}_N
 ) = \dim Z^2(
{\mathcal{H}}_N,{\mathcal{H}}_N
 ) =
 \begin{cases}
 \frac{N}{3}(8N^2 +6N+1) & \text{ if } N\geqslant 2 \\
 8 &\text{ if } N=1\, .
 \end{cases}
 $$
\end{corollary}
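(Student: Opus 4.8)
The plan is to derive everything from Theorem~\ref{theo2} together with the fact, verified in Example~\ref{exampleheisenberg}, that $\mathcal{H}_N$ is $\mathcal{I}$-null, so that the only genuinely computational point left is the value of $\dim H^2(\mathcal{H}_N,\mathcal{H}_N)$. For (i) and the value of $\dim ZL^2_0(\mathcal{H}_N,\mathcal{H}_N)$: since $\mathcal{H}_N$ is $\mathcal{I}$-null, $\ker\mathcal{I}=\left(S^2\mathcal{H}_N^*\right)^{\mathcal{H}_N}$; moreover the center is $\mathfrak{c}=\Cmath\,x_{2N+1}$, so $c=\dim\mathfrak{c}=1$, and $\mathcal{C}^2\mathcal{H}_N=\Cmath\,x_{2N+1}$, so $p=\dim\mathcal{H}_N/\mathcal{C}^2\mathcal{H}_N=2N$. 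The isomorphism $\varpi:\ker\mathcal{I}\to S^2\!\left(\mathcal{H}_N/\mathcal{C}^2\mathcal{H}_N\right)^*$ recalled above (just before the definition of $\mathcal{I}$-nullity) identifies $\ker\mathcal{I}$ with the span of $\left(\omega^i\odot\omega^j\right)_{1\le i\le j\le 2N}$, and by Theorem~\ref{theo2}(ii) $ZL^2_0(\mathcal{H}_N,\mathcal{H}_N)=\mathfrak{c}\otimes\ker\mathcal{I}$, which gives the basis in (i) and the value $\dim ZL^2_0(\mathcal{H}_N,\mathcal{H}_N)=c\,\tfrac{p(p+1)}{2}=N(2N+1)$.

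Next I would compute $\dim B^2(\mathcal{H}_N,\mathcal{H}_N)$. Since the Leibniz differential coincides with the Chevalley--Eilenberg one on $CL^1=C^1=\mathcal{H}_N\otimes\mathcal{H}_N^*$, we have $B^2(\mathcal{H}_N,\mathcal{H}_N)=BL^2(\mathcal{H}_N,\mathcal{H}_N)\cong C^1\big/\mathrm{Der}\,\mathcal{H}_N$. Every derivation preserves $\mathcal{C}^2\mathcal{H}_N=\mathfrak{c}$ and hence induces an endomorphism of $\mathcal{H}_N/\mathfrak{c}$; as the bracket induces a $\mathfrak{c}$-valued symplectic form on $\mathcal{H}_N/\mathfrak{c}$, that endomorphism must lie in the conformal symplectic algebra $\mathfrak{sp}(2N)\oplus\Cmath\,\mathrm{Id}$, of dimension $N(2N+1)+1$, and conversely every such endomorphism lifts to a derivation, two lifts differing by an arbitrary element of $\mathrm{Hom}(\mathcal{H}_N/\mathfrak{c},\mathfrak{c})$, of dimension $2N$. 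Hence $\dim\mathrm{Der}\,\mathcal{H}_N=N(2N+1)+1+2N=(2N+1)(N+1)$ and $\dim B^2(\mathcal{H}_N,\mathcal{H}_N)=(2N+1)^2-(2N+1)(N+1)=N(2N+1)$; together with the previous paragraph this is the first displayed line of (ii).

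Finally, $\mathcal{I}$-nullity forces the coupled term in Theorem~\ref{theo2}(i),(iii) to vanish, so $ZL^2(\mathcal{H}_N,\mathcal{H}_N)=Z^2(\mathcal{H}_N,\mathcal{H}_N)\oplus ZL^2_0(\mathcal{H}_N,\mathcal{H}_N)$; dividing by $BL^2=B^2$ and using $\dim ZL^2_0=\dim B^2$ gives $\dim HL^2(\mathcal{H}_N,\mathcal{H}_N)=\dim Z^2(\mathcal{H}_N,\mathcal{H}_N)$, the first equality of the second displayed line of (ii). What remains is $\dim Z^2(\mathcal{H}_N,\mathcal{H}_N)=\dim B^2(\mathcal{H}_N,\mathcal{H}_N)+\dim H^2(\mathcal{H}_N,\mathcal{H}_N)=N(2N+1)+\dim H^2(\mathcal{H}_N,\mathcal{H}_N)$, i.e. the claim reduces to $\dim H^2(\mathcal{H}_N,\mathcal{H}_N)=\tfrac{2N(2N-1)(2N+1)}{3}$ for $N\ge2$ and $\dim H^2(\mathcal{H}_1,\mathcal{H}_1)=5$. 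This adjoint $2$-cohomology computation is the one step that takes real work, and it is the main obstacle. I would carry it out either from the known adjoint cohomology of Heisenberg Lie algebras, or directly: decompose $C^2(\mathcal{H}_N,\mathcal{H}_N)$ by source (in $\bigwedge^2 V$ or $V\wedge\mathfrak{c}$, for a fixed complement $V$ of $\mathfrak{c}$) and target (in $V$ or $\mathfrak{c}$), translate $d\psi=0$ into the resulting block equations written through the symplectic form, and count; alternatively, run the Hochschild--Serre spectral sequence for the central ideal $\mathfrak{c}$ together with the coefficient sequence $0\to\mathfrak{c}\to\mathcal{H}_N\to\mathcal{H}_N/\mathfrak{c}\to0$, the abelian quotient making the relevant $E_2$-terms explicit exterior powers. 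The case $N=1$ must be treated apart: there $\mathcal{H}_1/\mathfrak{c}$ is only two-dimensional, so $\bigwedge^3(\mathcal{H}_1/\mathfrak{c})^*=0$, the differential $C^2\to C^3$ has unexpectedly small rank, $Z^2(\mathcal{H}_1,\mathcal{H}_1)$ is larger than the generic cubic formula would predict, and one gets $\dim HL^2(\mathcal{H}_1,\mathcal{H}_1)=3+5=8$.
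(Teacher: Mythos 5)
Your proposal is correct and follows the same route as the paper: reduce everything to Theorem~\ref{theo2} plus the $\mathcal{I}$-nullity of $\mathcal{H}_N$ (which gives $\ker\mathcal{I}\cong S^2(\mathcal{H}_N/\mathcal{C}^2\mathcal{H}_N)^*$ and adjoint $ZL^2$-uncoupling), then feed in the two numerical inputs $\dim B^2$ and $\dim H^2$. The only divergence is in how those inputs are obtained. The paper simply cites Magnin's computation of the adjoint cohomology of Heisenberg algebras \cite{commalg} for both $\dim B^2(\mathcal{H}_N,\mathcal{H}_N)=N(2N+1)$ and $\dim H^2(\mathcal{H}_N,\mathcal{H}_N)=\tfrac{2N}{3}(4N^2-1)$ ($N\geqslant 2$). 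You instead derive $\dim B^2$ from scratch via $\dim\mathrm{Der}\,\mathcal{H}_N=(2N+1)(N+1)$ (conformal symplectic part of dimension $N(2N+1)+1$ plus $\mathrm{Hom}(\mathcal{H}_N/\mathfrak{c},\mathfrak{c})$ of dimension $2N$), which is correct and makes that half self-contained; but for $\dim H^2$ you, like the paper, do not actually carry out the computation, only sketching two ways one might (citation of known results, or a direct block/spectral-sequence count), and correctly isolating the degenerate case $N=1$ where $\dim H^2(\mathcal{H}_1,\mathcal{H}_1)=5$ rather than the value $2$ given by the generic formula. So the structural argument is identical and complete; the one genuinely hard numerical step is deferred to the literature in both your write-up and the paper's.
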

\begin{proof}
(i) Follows from
$\ker{\mathcal{I}} =
S^2  \left(
\mathfrak{g}/\mathcal{C}^2 \mathfrak{g}
\right)^*
.$
\\(ii)
First
$\mathcal{H}_N$
is adjoint $ZL^2$-uncoupling since it is $\mathcal{I}$-null.
The result then follows  from the fact that (\cite{commalg})
 $ \dim B^2(
{\mathcal{H}}_N,{\mathcal{H}}_N
 ) = N(2N+1)$ and for $N\geqslant 2,$
 $ \dim H^2(
{\mathcal{H}}_N,{\mathcal{H}}_N
 ) = \frac{2N}{3}(4N^2-1).$
\end{proof}

\begin{example}
\rm
The case $N=1$ has been studied in \cite{fm}.  In that case,
\linebreak[4]
 $\dim ZL^2_0({\mathcal{H}}_1 , {\mathcal{H}}_1 ) = 3$ and the 3 Leibniz deformations
 are nilpotent, in contradistinction with the 5 Lie deformations. The
 authors completely describe a Leibniz versal deformation of the
 3-dimensional Heisenberg algebra.
\end{example}

\begin{example}
\rm
The 4-dimensional solvable "diamond" Lie algebra $\mathfrak{d}$ has basis
$(x_1,x_2,x_3,x_4)$ and
nonzero commutation relations (with anticommutativity)
\begin{equation}
\label{usual}
[x_1,x_2]=x_3,
[x_1,x_3]=-x_2,
[x_2,x_3]=x_4.
\end{equation}
The relations show that $\mathfrak{d}$ is an extension of the
one-dimensional abelian Lie algebra $\Cmath x_1$ by the Heisenberg algebra
$\mathfrak{n}_3$ with basis $x_2, x_3, x_4$.
It is also known as the Nappi-Witten Lie algebra \cite{nw} or the
central extension of the Poincar\'e Lie algebra in two dimensions. It
is a solvable quadratic Lie algebra, as admits a nondegenerate
bilinear symmetric
invariant form. Because of these properties, it plays an important role
in conformal field theory. We can use $\mathfrak{d}$ to construct a
Wess-Zumino-Witten model, which describes a homogeneous
four-dimensional Lorentz-signature space time \cite{nw}.
It is easy to check that $\mathfrak{d}$ is $\mathcal{I}$-exact.
In fact, one verifies that all other solvable 4-dimensional
Lie algebras are
 $\mathcal{I}$-null (for a list, see e.g. \cite{ovando}).

Consider $\mathfrak{d}$ as Leibniz algebra with a different basis
$\{e_1,~e_2,~e_3,~e_4\}$ over
 $\mathbb{C}$. Define a bilinear map $[~,~]: L\times L \longrightarrow L$
 by $[e_2,e_3]=e_1$, $[e_3,e_2]=-e_1$, $[e_2,e_4]=e_2$, $[e_4,e_2]=-e_2$,
 $[e_3,e_4]=e_2 -e_3$ and $[e_4,e_3]= e_3-e_2$, all other products of basis
  elements being $0$.

  \rm
We get a basis satisfying the usual commutation relations
(\ref{usual}) by letting
\begin{equation}
\label{gotousual}
x_1=ie_4, \;
  x_2=e_3, \;
  x_3=i(-e_2+e_3), \;
  x_4=ie_1.
\end{equation}

 One should mention that even though these two forms are equivalent
 over $\mathbb{C}$, they represent the two nonisomorphic real forms of the
 complex diamond algebra.

 We found that by considering Leibniz algebra deformation of
$\mathfrak{d}$ one gets more structures. Indeed it gives not only extra
stucture but also keeps track of Lie structures obtained by considering
Lie algebra  deformations. To get the precise deformations we need to
consider the  cohomology groups.

We compute cohomologies necessary for our purpose.
First consider the Leibniz cohomology space $HL^2(L;L)$.
Our computation consists of the following steps:\\
 (i) To determine a basis of the space of cocycles $ZL^2(L;L)$,\\
 (ii) to find out a basis of the coboundary space $BL^2(L;L)$,\\
 (iii) to determine the quotient space  $HL^2(L;L)$.\\

(i) Let $\psi$ $\in$ $ZL^2(L;L)$. Then $\psi :L\otimes L\longrightarrow L$ is a linear map and  $\delta \psi =0$, where
 \begin{equation*}
 \begin{split}
 \delta \psi(e_i, e_j, e_k)
 &=[e_i,\psi(e_j, e_k)]+[\psi (e_i, e_k), e_j]-[\psi(e_i, e_j), e_k] -\psi([e_i, e_j], e_k) \\
&~ +\psi(e_i,[e_j,e_k])+\psi([e_i, e_k], e_j) ~\mbox{for}~0\leq i,j,k \leq 4.
\end{split}
\end{equation*}
Suppose  $\psi(e_i,e_j)=\sum_{k=1} ^{4} a_{i,j}^{k} e_k$ where $a_{i,j}^{k} \in \mathbb C$
  ; for $1\leq i,j,k\leq 4$.
Since $\delta \psi =0$ equating the coefficients  of $e_1, e_2, e_3
~\mbox{and}~ e_4
$ in $\delta \psi(e_i, e_j, e_k)$ we get the following relations:
\begin{equation*}
\begin{split}
&(i)~ a_{1,1}^1=a_{1,1}^2 =a_{1,1}^3=a_{1,1}^4=a_{1,2}^1=a_{1,2}^3=a_{1,2}^4=0 ;\\
&(ii)~a_{1,3}^4=a_{1,4}^3=a_{1,4}^4=a_{2,1}^1=a_{2,1}^3=a_{2,1}^4=a_{2,2}^1=a_{2,2}^2=a_{2,2}^3=a_{2,2}^4=0;\\
&(iii)~a_{3,1}^4=a_{3,3}^2=a_{3,3}^3=a_{3,3}^4=a_{4,1}^3=a_{4,1}^4=a_{4,4}^2=a_{4,4}^3=a_{4,4}^4=0;\\
&(iv)~a_{1,2}^2=-a_{2,1}^2=a_{1,3}^2=-a_{1,3}^3=-a_{3,1}^2=a_{3,1}^3; \\
&(v)~a_{1,3}^1=-a_{3,1}^1=a_{1,4}^2=-a_{4,1}^2 ;\\
&(vi)~a_{2,3}^3=-a_{3,2}^3=-a_{2,4}^4=a_{4,2}^4;~~ a_{2,3}^4=-a_{3,2}^4;~~ a_{2,3}^2=-a_{3,2}^2;\\
&(vi)~a_{2,4}^1=-a_{4,2}^1;~a_{2,4}^2=-a_{4,2}^2;~a_{2,4}^3=-a_{4,2}^3;\\
&(vii)~a_{3,4}^1=-a_{4,3}^1;~a_{3,4}^2=-a_{4,3}^2;~a_{3,4}^3=-a_{4,3}^3;~a_{3,4}^4=-a_{4,3}^4\\
&(ix)~a_{3,4}^3=(a_{14}^1-a_{24}^2);~~a_{3,4}^4=(a_{14}^2+a_{23}^2)\\
&(x)a_{33}^1=\frac{1}{2}(a_{23}^1+a_{32}^1);~a_{41}^1=-(a_{14}^1+a_{23}^1+a_{32}^1).
\end{split}
\end{equation*}
 Therefore, in terms of the ordered basis
$\{e_i\otimes e_j\}_{1\leq i,j\leq 4}$ of $L\otimes L$ and $\{e_i\}_{1\leq i \leq 4}$ of $L$,transpose of the matrix corresponding to $\psi $ is of the form

$$M^t= \left( \begin{array}{llrr}
~0           & ~~0       &~~ 0          &~~  0       \\
~0           & ~~x_1     &~~ 0          &~~  0       \\
~x_2         & ~~x_1     & -x_1         & ~~  0      \\
~x_3         & ~~x_2     & ~~0          &~~  0       \\
~0           & -x_1      & ~~0        & ~~ 0     \\
~0           & ~~0       & ~~0          & ~~ 0       \\
~x_4         & ~~x_5     & ~~x_6        &~~x_7    \\
~x_8         & ~~x_9  & ~~x_{10}     & -x_6     \\
-x_2         &-x_1       & ~~x_1        &~~0  \\
x_{11}       & -x_5      & -x_6         &-x_7   \\
\frac{1}{2}(x_4+x_{11})         &~~ 0       & ~~0          & ~~0        \\
~x_{12}      &~~ x_{13}  & ~~(x_{3}-x_9) & ~~(x_2+x_5)  \\
-(x_4+x_3+x_{11})      &-x_2       &~~ 0          &~~0  \\
-x_{8}      &-x_{9}    &-x_{10}       &~~x_6       \\
-x_{12}      &-x_{13}    & -(x_{3}-x_9) & -(x_2+x_5) \\
~x_{14}      &~~ 0       & ~~0            & ~~ 0
\end{array}  \right).$$

\begin{equation*}
\begin{split}
&\mbox{where}~ x_1=a_{1,2}^2;~ x_2=a_{1,3}^1 ;~x_3=a_{1,4}^1;~
x_4=a_{2,3}^1;~x_5=a_{2,3}^2;~x_6=a_{2,3}^3;\\
&~x_7=a_{2,3}^4;~x_8=a_{2,4}^1;~x_9=a_{2,4}^2;x_{10}=a_{2,4}^3;~x_{11}=a_{3,2}^1;~x_{12}=a_{3,4}^1;\\
&~x_{13}=a_{3,4}^2~\mbox{and}~~x_{14}=a_{4,4}^1
\end{split}
\end{equation*} are in $\mathbb C$~.
Let $\phi_i \in ZL^2(L;L)$ for $1 \leq i\leq 14$, be the cocyle with
$x_i=1$ and $x_j=0$ for $i\neq j$ in the above matrix of $\psi$. It is
easy to check that $\{\phi_1,\cdots, \phi_{14}\}$ forms a basis of
$ZL^2(L;L)$.

(ii)
Let $ \psi_0 \in BL^2(L;L)$. We have $\psi_0=\delta g$ for some
$1$-cochain $g \in CL^1(L;L)=\text{Hom\,}(L;L)$. Suppose the matrix associated
to $\psi_0$ is same as the above matrix $M$.

Let  $g(e_i)=a_i ^1 e_1 +a_i ^2 e_2+a_i ^3 e_3+a_i^4 e_4$ for $i=1,2,3,4$.
The matrix associated to $g$ is given by
\begin{center} $\left(\begin{array}{llll}
a_1 ^1& a_2 ^1  & a_3 ^1 & a_4 ^1   \\
a_1 ^2& a_2 ^2  & a_3 ^2 & a_4 ^2    \\
a_1 ^3& a_2 ^3  & a_3 ^3 & a_4 ^3   \\
a_1 ^4& a_2 ^4  & a_3 ^4 & a_4 ^4
\end{array} \right).$ \end{center}
From the definition of coboundary we get $$\delta g(e_i,e_j)=[e_i,g(e_j)]+[g (e_i),e_j]-\psi([e_i,e_j])$$ for $0\leq i,j \leq 4$. The transpose matrix of $\delta g$ can be written as

$$ \left( \begin{array}{llrr}
~0                         & ~0                                &~~ 0               &~~  0              \\
-a_{1}^3                   & -a_{1}^4                          &~~ 0               &~~  0              \\
~a_{1}^2                   & -a_{1}^4                          & a_{1}^4           & ~~  0             \\
~0                         & (a_{1}^2+a_{1}^3)                 &-a_{1}^3           &~~  0               \\
~a_{1}^3                   & a_{1}^4                           & ~~0               & ~~ 0               \\
~0                         & ~~0                               & ~~0               & ~~ 0               \\
-(a_{1}^1-a_{2}^2-a_{3}^3) & -(a_{1}^2+a_{2}^4-a_{3}^4)        & -(a_{1}^3-a_{2}^4 &-a_{1}^4            \\
-(a_{2}^1-a_{4}^3)         & (a_{2}^3+a_{4}^4)                 & -2a_{2}^3         & -a_{2}^4            \\
-a_{1}^2                   &a_{1}^4                            & -a_{1}^4          &~~0                 \\
~(a_{1}^1-a_{2}^2-a_{3}^3) & (a_{1}^2+a_{2}^4-a_{3}^4)         & (a_{1}^3-a_{2}^4) &a_{1}^4             \\
~0                         &~~ 0                               & ~~0               & ~~0                 \\
-(a_{2}^1-a_{3}^1+a_{4}^2) &-(a_{2}^2-2a_{3}^2-a_{3}^3-a_{4}^4)& -(a_{2}^3+a_{4}^4)& -(a_{2}^4-a_{3}^4)  \\
~0                         &-(a_{1}^2+a_{1}^3)                 &a_{1}^3            &~~0                  \\
~(a_{2}^1-a_{4}^3)         &-(a_{2}^3+a_{4}^4)                 &2a_{2}^3           &~a_{2}^4             \\
~(a_{2}^1-a_{3}^1+a_{4}^2) &(a_{2}^2-2a_{3}^2-a_{3}^3-a_{4}^4) & (a_{2}^3+a_{4}^4) & (a_{2}^4-a_{3}^4)   \\
~0                         &~~ 0                               & ~~0               & ~~ 0
\end{array}  \right).$$

Since $\psi_0=\delta g$ is also a cocycle in $CL^2(L;L)$, comparing matrices $\delta g$ and $M$ we conclude that the transpose matrix of $\psi_0$  is of the form

$$M^t= \left( \begin{array}{llrr}
~0           & ~~0       &~~ 0          &~~  0       \\
~0           & ~~x_1     &~~ 0          &~~  0       \\
~x_2         & ~~x_1     & -x_1         & ~~  0      \\
~0           & ~~x_2     & ~~0          &~~  0       \\
~0           & -x_1      & ~~0          & ~~ 0     \\
~0           & ~~0       & ~~0          & ~~ 0       \\
~x_4         & ~~x_5     & ~~x_6        &~~x_1    \\
~x_8         & ~~x_9     & ~~x_{10}     & -x_6     \\
-x_2         &-x_1       & ~~x_1        &~~0  \\
-x_{4}       & -x_5      & -x_6         &-x_1   \\
~0           &~~ 0       & ~~0         & ~~0        \\
~x_{12}      &~~ x_{13}  &-x_9          & ~~(x_2+x_5)  \\
~0           &-x_2       &~~ 0          &~~0  \\
-x_{8}       &-x_{9}    &-x_{10}       &~~x_6       \\
-x_{12}      &-x_{13}    & ~~x_9       & -(x_2+x_5) \\
~0           &~~ 0       & ~~0            & ~~ 0
\end{array}  \right).$$

Let ${\phi_i}^\prime \in BL^2(L;L)~\mbox{for}~i=1,2,4,5,6,8,9,10,12,13$ be the coboundary with $x_i=1$ and $x_j=0$ for $i\neq j$ in the above matrix of
$\psi_0$. It follows that $\{\phi_1^\prime,\phi_2^\prime,\phi_4^\prime,\phi_5^\prime,\phi_6^\prime,\phi_{8}^\prime,\phi_{9}^\prime,\phi_{10}^\prime,\phi_{12}^\prime,\phi_{13}^\prime\}$ forms a basis of the coboundary space $BL^2(L;L)$.

(iii)
It is  straightforward to check that  $$\{[\phi_3],[\phi_7],[\phi_{11}],[\phi_{14}]\}$$
span $HL^2(L;L)$ where $[\phi_i]$ denotes the cohomology class represented by the cocycle $\phi_i$.

Thus  $\dim(HL^2(L;L))=4$.

The representative cocycles of the cohomology classes forming a basis
of $HL^2(L;L)$ are given explicitely as the following.
\begin{equation*}
 \begin{split}
&(1)~\phi_{3} : \phi_3(e_1,e_4)=e_1,~ \phi_3(e_4,e_1)=-e_1;~\phi_3(e_3,e_4)=e_3;~\phi_3(e_4,e_3)=-e_3;\\
&(2)~\phi_{7} : \phi_{7}(e_2,e_3)=e_4,~ \phi_{7}(e_3,e_2)=-e_4;\\
&(3)~\phi_{11} : \phi_{11}(e_3,e_2)=e_1,~\phi_{11}(e_3,e_3)=\frac{1}{2}e_1,~\phi_{11}(e_4,e_1)=-e_1;\\
&(4)~\phi_{14}:\phi_{14}(e_4,e_4)=e_1.
\end{split}
\end{equation*}
Here $\phi_3$ and $\phi_{7}$ are skew-symmetric, so $\phi_i \in
Hom(\Lambda^2 L ; L) \subset Hom(L^{\otimes 2};L)$ for $i=3$ and $7$.

Consider, $\mu_i=\mu_0 +t \phi_i$ for $i=3,7,11,14$, where $\mu_0$
denotes the original bracket in $L$.

This gives $4$ non-equivalent infinitesimal deformations of the Leibniz
bracket $\mu_0$ with $\mu_3$ and $\mu_{7}$ giving the Lie algebra
structure on $L[[t]]/<t^2>$.

Now we have to compute the Massey brackets $[\phi_i, \phi_j]$ which are
responsible for obstructions to extend infinitesimal deformations.
We find
$$
[\phi_3, \phi_3] = 0, \quad [\phi_7, \phi_7] = 0.
$$
That means that the two infinitesimal Lie deformations can be extended
to real deformations, with the new nonzero brackets (and their
anticommutative version)

The first of the deformations represents a 2-parameter projective family
$d(\lambda,\mu)$, for which each projective
parameter $(\lambda,\mu)$ defines a nonisomorphic
Lie algebra (in fact, the diamond algebra is a member of this family
with $(\lambda,\mu) = (1,-1))$:
\begin{align*}
[e_2,e_3]_{\lambda,\mu} &= e_1 \\
[e_2,e_4]_{\lambda,\mu} &= \lambda e_2 \\
[e_3,e_4]_{\lambda,\mu} &= e_2 + {\mu}e_3  \\
[e_1,e_4]_{\lambda,\mu} &= (\lambda+\mu)e_1.
\end{align*}
The second deformation,
\begin{align*}
[e_2,e_3]_t &= e_1 + te_4 \\
[e_2,e_4]_t &= e_2 \\
[e_3,e_4]_t &= e_2 - e_3
\end{align*}
is isomorphic to $\mathfrak{sl}(2,\Cmath) \oplus
\Cmath$ for every nonzero value of $t$, see \cite{fp}.
\medskip
Furthermore, we also have $[\phi_{14},\phi_{14}] = 0$ which means that
$\phi_{14}$ defines a real Leibniz deformation:
\begin{align*}
[e_2,e_3]_t &= e_1 \\
[e_2,e_4]_t &= e_2 \\
[e_3,e_4]_t &= e_2 - e_3 \\
[e_4,e_4]_t &= te_1.
\end{align*}
We note that this Leibniz algebra is not nilpotent.

For the bracket $[\phi_{11},\phi_{11}]$ we get a nonzero 3-cocycle, so the
infinitesimal Leibniz deformation with infinitesimal part being
$\phi_{11}$ can not be extended even to the next order.

\medskip
The nontrivial mixed brackets $[\phi_i,\phi_j]$ determine relations on
the base of versal deformation.

\medskip
Among the six possible cases
$[\phi_{3},\phi_{11}]$, $[\phi_{3},\phi_{14}]$ and $[\phi_{11},\phi_{14}]$
are nontrivial $3$-cocycles, the others are represented by
$3$-coboundaries.

Thus we need to check the Massey $3$-brackets which are defined, namely

$<\phi_{3},\phi_{3},\phi_{7}>$

$<\phi_{3},\phi_{7},\phi_{7} >$

$<\phi_{7},\phi_{7},\phi_{11} >$

$<\phi_{7},\phi_{7},\phi_{14} >$

$<\phi_{7},\phi_{14},\phi_{14} >$

In these five possible Massey $3$-brackets, only $<\phi_{3},\phi_{3},\phi_{7} >$
 is represented by nontrivial cocycle.

So we now proceed to compute the possible Massey $4$-brackets. We get
that four of them are nontrivial:

$<\phi_{3},\phi_{7},\phi_{7},\phi_{11} >$

$<\phi_{3},\phi_{7},\phi_{7},\phi_{14} >$

$<\phi_{7},\phi_{7},\phi_{14},\phi_{11} >$

$<\phi_{7},\phi_{7},\phi_{14},\phi_{14} >$.

At the next step, we get that all the 5-order Massey products are
either not defined or are trivial.

So we can write the versal Leibniz deformation of our Lie algebra:
\begin{align*}
&[e_1, e_2]_v = [e_2, e_1]_v = [e_1, e_3]_v = [e_3, e_1]_v = 0,\\
&[e_1, e_4]_v = te_1,\\
&[e_4, e_1]_v = -(t + u)e_1,\\
&[e_2, e_3]_v = e_1 + se_4, \\
&[e_3, e_2]_v = (u - 1)e_1 -se_4 ,\\
&[e_2, e_4]_v = e_2,\\
&[e_4, e_2] = -e_2,\\
&[e_3, e_4]_v = e_2 + (t - 1)e_3,\\
&[e_4, e_3]_v = -e_2 + (1 - t)e_3,\\
&[e_1, e_1]_v = [e_2, e_2]_v = 0,\\
&[e_3, e_3]_v = 1/2ue_1,\\
&[e_4, e_4]_v = we_1.
\end{align*}
The base of the versal deformation is
$$
 \Cmath[[t,s,u,w]]/\{tu,tw,uw; t^2s; ts^2u,ts^2w,s^2uw,s^2w^2\}.
$$
\end{example}

\rm
\begin{example}
\rm
The quadratic $5$-dimensional nilpotent Lie algebra
$\mathfrak{g}_{5,4}$ \cite{ART}
has commutation relations
$[x_1,x_2]=x_3,$
$[x_1,x_3]=x_4,$
$[x_2,x_3]=x_5.$

This is an extension of the trivial Lie algebra $\Cmath x_1$ by the
 4-dimensional Lie algebra $\Cmath x_4 \times \mathfrak{n}_3$
($\mathfrak{n}_3$ the 3-dimensional Heisenberg Lie algebra
$[x_2,x_3]=x_5$). As it is moreover the only 5-dimensional indecomposable nilpotent Lie algebra
 which is not $\mathcal{I}$-null, it can be considered as a 5-dimensional
 analogue of the diamond algebra $\mathfrak{d}$.

Let us first compute its trivial Leibniz cohomology.
We here denote simply $d$ for $d_{\Cmath}$, and
 $\omega^{i,j}$ for  $ \omega^i\wedge   \omega^j$
 (see also \cite{hindawi},\cite{Magnin1}).
 \\
$B^2(\mathfrak{g},\Cmath) = \langle
d\omega^3=-\omega^{1,2}, d\omega^4=-\omega^{1,3}, d\omega^5=-\omega^{2,3}\rangle,$
$\dim Z^2(\mathfrak{g},\Cmath) = 6,$
$\dim H^2(\mathfrak{g},\Cmath) = 3,$
$Z^2(\mathfrak{g},\Cmath) = \langle
\omega^{1,4},
\omega^{2,5},
\omega^{1,5}+
\omega^{2,4}\rangle \oplus  B^2(\mathfrak{g},\Cmath),$
$\dim{ZL^2_0(\mathfrak{g},\Cmath)}=3,$
$ZL^2_0(\mathfrak{g},\Cmath)
(\cong \ker{\mathcal{I}})
= \langle
\omega^1 \otimes \omega^1,
\omega^1 \odot \omega^2,
\omega^2 \otimes \omega^2 \rangle,$
$\dim{ZL^2(\mathfrak{g},\Cmath) }= 10,$
$\dim{HL^2(\mathfrak{g},\Cmath) }= 7,$ and
\begin{eqnarray*}
ZL^2(\mathfrak{g},\Cmath)& =&
Z^2(\mathfrak{g},\Cmath) \oplus
ZL^2_0(\mathfrak{g},\Cmath)
\oplus \Cmath g_1,\\
HL^2(\mathfrak{g},\Cmath) &=&
H^2(\mathfrak{g},\Cmath) \oplus
ZL^2_0(\mathfrak{g},\Cmath)
\oplus \Cmath g_1
\end{eqnarray*}
with
$g_1= B + \omega^{1,5}$
and $B= \omega^1 \odot \omega^5 -  \omega^2 \odot \omega^4  +\omega^3 \otimes \omega^3.$
(Here
$\text{Im\,}{\mathcal{I}} = \Cmath I_B = \Cmath d\omega^{1,5}$
and
$\text{Im\,}{\mathcal{I}} \cap B^3(\mathfrak{g},\Cmath)=
\text{Im\,}{\mathcal{I}} $
is one-dimensional.)
${\mathfrak{g}}_{5,4}$  is not trivial $ZL^2$-uncoupling
(hence not adjoint $ZL^2$-uncoupling either),
and $g_1$ is a coupled Leibniz 2-cocycle.\\

Now let us turn to the adjoint Leibniz cohomology, which represents
nonequivalent infinitesimal Leibniz deformations.\\
$\dim{Z^2(\mathfrak{g},\mathfrak{g})} = 24;$
$ZL^2_0(\mathfrak{g},\mathfrak{g})= \mathfrak{c}\, \otimes \ker{\mathcal{I}}$ has dimension 6,
$\dim{ZL^2(\mathfrak{g},\mathfrak{g})}=32,$
\begin{eqnarray*}
ZL^2(\mathfrak{g},\mathfrak{g})&=&
Z^2(\mathfrak{g},\mathfrak{g})
\oplus
ZL^2_0(\mathfrak{g},\mathfrak{g})
\oplus \Cmath G_1
\oplus \Cmath G_2,\\
HL^2(\mathfrak{g},\mathfrak{g})&=&
H^2(\mathfrak{g},\mathfrak{g})
\oplus
ZL^2_0(\mathfrak{g},\mathfrak{g})
\oplus \Cmath G_1
\oplus \Cmath G_2,
\end{eqnarray*}
where $G_1,G_2$ are the following Leibniz 2-cocycles, each of which is coupled:
\begin{eqnarray*}
G_1&=&
x_5 \otimes ( B +  \omega^{1,5})\\
G_2&=&
x_4 \otimes ( B +   \omega^{1,5})
\end{eqnarray*}
Here $H^2(\mathfrak{g},\mathfrak{g})$ has dimension 9. \\
Of course, these spaces are huge to compute, but we would like to point
out some structural similarity with the diamond algebra.

\medskip

One may observe that the coupled cocycle $\phi_{11}$ of $\mathfrak{d}$
reads
in the basis (\ref{gotousual})
$$\phi_{11}= -ix_4 \otimes (C -\omega^{2,3}+\omega^{1,4})$$
with $C=
\omega^{1} \odot \omega^{4}
+\omega^{2} \otimes \omega^{2}
+\omega^{3} \otimes \omega^{3}
$
the non degenerate invariant bilinear form,
a similarity with $G_1, G_2.$
The similarity extends to the fact that $G_1,G_2$ cannot be extended
to the second level.

\bigskip
As of Lie deformations, $\mathfrak{g}_{5,4}$ has a number of
deformations. Without identifying all of them, we list some:

1. A three-parameter solvable projective family $d(p:q:r)$ where $\mathfrak{g}_{5,4}$
belongs (it is its nilpotent element, with $p=q=r=0$) with nonzero brackets
\begin{align*}
[x_3,x_4]_{p,q,r}&=x_2 \\
[x_1,x_5]_{p,q,r}&=rx_1 \\
[x_2,x_5]_{p,q,r}&=(p+q)x_2 \\
[x_3,x_5]_{p,q,r}&=px_3 + x_1 \\
[x_4,x_5]_{p,q,r}&=x_3 + qx_4.
\end{align*}

2. A solvable Lie algebra with nonzero brackets
\begin{align*}
[x_3,x_4]&=2x_4\\
[x_3,x_5]&=-2x_5\\
[x_4,x_5]&=x_3\\
[x_1,x_2]&=x_1.
\end{align*}

3. Another solvable Lie algebra with nonzero brackets
\begin{align*}
[x_3,x_4]&=2x_4\\
[x_3,x_5]&=-2x_5\\
[x_4,x_5]&=x_3\\
[x_1,x_3]&=x_1\\
[x_2,x_5]&=x_1\\
[x_2,x_3]&=-x_2\\
[x_1,x_4]&=x_2.
\end{align*}

4. A 2-parameter solvable projective family with nonzero brackets
\begin{align*}
[x_2,x_5]_{p,q}&=x_1+px_2\\
[x_3,x_5]_{p,q}&=x_2+qx_3\\
[x_4,x_5]_{p,q}&=x_3+(p+q)x_4\\
[x_1,x_5]_{p,q}&=(p+q)x_1\\
[x_2,x_3]_{p,q}&=pqx_1\\
[x_2,x_4]_{p,q}&=qx_1\\
[x_3,x_4]_{p,q}&=x_1.
\end{align*}

5. Another 2-parameter solvable projective family with nonzero brackets
\begin{align*}
[x_3,x_4]_{p,q}&=x_2\\
[x_2,x_5]_{p,q}&=(p+q)x_2\\
[x_3,x_5]_{p,q}&=x_1+px_3\\
[x_4,x_5]_{p,q}&=x_3+qx_4\\
[x_1,x_5]_{p,q}&=(q+2p)x_1\\
[x_2,x_3]_{p,q}&=(p-q)x_1\\
[x_2,x_4]_{p,q}&=x_1.
\end{align*}
\end{example}


\end{document}